\definecolor{webgreen}{rgb}{0,.5,0}
\definecolor{webbrown}{rgb}{.6,0,0}
\begin{document}
	\theoremstyle{plain}
	\newtheorem{theorem}{Theorem}
	\newtheorem*{theorem*}{Theorem}
	
	\newtheorem{corollary}[theorem]{Corollary}
	\newtheorem{lemma}[theorem]{Lemma}
	\newtheorem{proposition}[theorem]{Proposition}
	\theoremstyle{definition}
	\newtheorem{definition}[theorem]{Definition}
	\newtheorem{notation}[theorem]{Notation}
	\newtheorem{example}[theorem]{Example}
	\newtheorem{conjecture}[theorem]{Conjecture}
	\theoremstyle{remark}
	\newtheorem{remark}[theorem]{Remark}
	
		\begin{center}
		\vskip 1cm{\Large \bf 
	A new explicit expansion approach to Mersenne primes}
		\vskip 1cm
	
		Moustafa Ibrahim\\
		Department of Mathematics\\
		College of Science, University of Bahrain, 
		 Kingdom of Bahrain\\
		\href{mailto:mimohamed@uob.edu.bh}{\tt mimohamed@uob.edu.bh}
	\end{center}
	
	\vskip .2 in
	
	\begin{abstract}
This paper first proves what the author called the Eight Levels Theorem and then highlights a new explicit expansion approach to Lucas-Lehmer primality test for Mersenne primes and gives a new criterion for Mersenne compositeness. Also, we prove four new combinatorial identities.
	\end{abstract}
		
		\section{Notations} For a natural number $n$, we define $\delta(n)=n\pmod{2}$. For an arbitrary real number $x$,  $\lfloor{\frac{x}{2}}\rfloor$ is the highest integer less than or equal $\frac{x}{2}$. We also need the following notation
\[ 	\prod\limits_{\lambda = 0}^{-1} n^2 - (4 \lambda)^2 := 1\]

\section{Introduction}
	 
	 Primes of special form have been of perennial interest, \cite{Guy}. Among these, the primes of the form 
	 	\[ 2^p -1\]
	which are called Mersenne prime. It is outstanding in their simplicity.

	\begin{itemize} 
		
		\item Clearly, if $2^p -1$ is prime then $p$ is prime. 
		The number  $2^p -1$ is Mersenne composite if $p$ is prime but $2^p -1$ is not prime. 
	\item	 Mathematics is kept alive by the appearance of challenging unsolved problems. The current paper gives new expansions to the following two major open questions in number theory (see \cite{Elina},  \cite{Mullen}, \cite{Jean}, \cite{Guy},  \cite{Washington}):
Are there infinitely many Mersenne primes? Are there infinitely many Mersenne composite?
		\item 
	 Mersenne primes have a close connection to perfect numbers, which are numbers that are equal to the sum of their proper divisors. It is known that Euclid and Euler proved that 
	 a number $N$ is even perfect number if and only if $N=2^{p-1}(2^p-1)$ for some prime $p$, and $2^p -1$ is prime. Euclid proved only that this statement was sufficient. Euler, 2000 years later, proved that all even perfect numbers are of the form $2^{p-1} (2^p -1)$ where $2^p-1$ is a Mersenne prime $M_p$ (see \cite{Jean, Penguin}). Thus the theorems of Euclid and Euler characterize all even perfect numbers, reducing their existence to that of Mersenne primes. 
	 	
	 	\item The odd perfect numbers are quite a different story (see \cite{Penguin, 25, Dickson1919}). It is unknown whether there is any odd perfect number. Recently, \cite{25} showed that odd perfect numbers, if exist, are greater than $10^{1500}$.   
	 	
	 	\item It is well-known the following two theorems  (see \cite{Jean}, \cite{Elina}, \cite{Washington}):
	 		
	 	\begin{theorem}{(Lucas-Lehmer)}
	 		\label{E0}\\
	 		$2^p -1$ is Mersenne prime \textbf{ if and only if}
	 		\begin{equation}
	 			\label{E00} 
	 			2n -1 \quad | \quad (1+\sqrt{3})^n + (1-\sqrt{3})^n
	 		\end{equation}
	 		where $n:=2^{p-1}$.	
	 	\end{theorem}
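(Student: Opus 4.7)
The plan is to recognise that the stated divisibility is a simple algebraic repackaging of the classical Lucas--Lehmer test, so the theorem reduces immediately to it. The key observation is the identity $(1\pm\sqrt{3})^{2}=2(2\pm\sqrt{3})$, combined with the fact that $2+\sqrt{3}$ and $2-\sqrt{3}$ are reciprocal (their norm is $+1$), which is exactly what makes the Lucas--Lehmer squaring recurrence work.

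First I would square the bases. Since $n=2^{p-1}$ is even for $p\ge 2$,
\[
(1+\sqrt{3})^{n}+(1-\sqrt{3})^{n}
=\bigl[2(2+\sqrt{3})\bigr]^{n/2}+\bigl[2(2-\sqrt{3})\bigr]^{n/2}
=2^{n/2}\Bigl[(2+\sqrt{3})^{n/2}+(2-\sqrt{3})^{n/2}\Bigr].
\]
Setting $T_{k}:=(2+\sqrt{3})^{2^{k}}+(2-\sqrt{3})^{2^{k}}$ and using $(2+\sqrt{3})(2-\sqrt{3})=1$, one checks $T_{0}=4$ and $T_{k+1}=T_{k}^{2}-2$; thus $T_{k}$ coincides with the classical Lucas--Lehmer sequence $S_{k}$. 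With $n/2=2^{p-2}$ this gives
\[
(1+\sqrt{3})^{n}+(1-\sqrt{3})^{n}=2^{n/2}\,S_{p-2}.
\]

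Finally, $2n-1=2^{p}-1=M_{p}$ is odd, hence coprime to $2^{n/2}$, so $M_{p}$ divides the left side if and only if $M_{p}\mid S_{p-2}$. The latter is the classical Lucas--Lehmer criterion for Mersenne primality, which supplies both implications. The only point requiring care is the boundary $p=2$, where $S_{p-2}=S_{0}=4$ sits at the base of the recursion and must be treated separately (indeed $M_{2}=3$ does not divide $(1+\sqrt{3})^{2}+(1-\sqrt{3})^{2}=8$, so the statement should be read for odd primes $p$). The main obstacle is conceptual rather than technical: noticing that the factor $2^{n/2}$ is precisely what converts the author's base $(1\pm\sqrt{3})$, of norm $-2$, into the norm-one base $(2\pm\sqrt{3})$ on which the standard Lucas--Lehmer doubling identity lives.
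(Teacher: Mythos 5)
Your proposal is correct, and it supplies a derivation that the paper itself omits: Theorem \ref{E0} is stated there as a known result and simply referred to the cited literature, so there is no in-paper proof to compare against. Your reduction is the standard one that those references use to pass between the two formulations of the test: the identity $(1\pm\sqrt{3})^2 = 2(2\pm\sqrt{3})$ converts the norm $-2$ base into the norm $+1$ base, the doubling identity $T_{k+1}=T_k^2-2$ identifies $(2+\sqrt{3})^{2^{p-2}}+(2-\sqrt{3})^{2^{p-2}}$ with the Lucas--Lehmer term $S_{p-2}$, and oddness of $2^p-1$ lets you strip the factor $2^{n/2}$. All steps check out. Your caveat about $p=2$ is also a genuine (if minor) correction to the theorem as printed: $M_2=3$ is prime yet $3\nmid 8$, so the equivalence really does require $p$ odd; the paper silently sidesteps this by working only with $p\ge 5$ in all of its applications. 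The one thing to keep in mind is that your argument, like the paper's citation, ultimately rests on the classical Lucas--Lehmer theorem ($M_p$ prime iff $M_p\mid S_{p-2}$), whose proof is the substantive content; what you have established is the exact equivalence of the paper's formulation with that classical statement, which is all that is being claimed here.
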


	 \begin{theorem}{(Euclid-Euler-Lucas-Lehmer)}
	 	\label{E}\\
	 	A number $N$ is even perfect number \textbf{ if and only if} $N=2^{p-1}(2^p-1)$ for some prime $p$, and 
	 	\begin{equation}
	 		\label{E1} 
	 		2n -1 \quad | \quad (1+\sqrt{3})^n + (1-\sqrt{3})^n
	 	\end{equation}
	 	where $n:=2^{p-1}$.	
	 \end{theorem}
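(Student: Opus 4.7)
The plan is to derive this theorem as a direct synthesis of the classical Euclid--Euler characterization of even perfect numbers with the Lucas--Lehmer criterion (Theorem~\ref{E0} above). The statement is a biconditional, and I would prove it by chaining two biconditionals, each already available in the literature, and verifying that the parameter matching is exactly right.

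First, I would invoke the Euclid--Euler theorem recalled in the introduction: $N$ is an even perfect number if and only if $N = 2^{p-1}(2^p-1)$ for some prime $p$ with $2^p-1$ a Mersenne prime. Euclid supplied the sufficiency, via a short divisor-sum computation showing that $2^{p-1}(2^p-1)$ is perfect whenever $2^p-1$ is prime; Euler, two millennia later, supplied the necessity by a careful factorization argument using $\sigma(N)=2N$ together with the multiplicativity of $\sigma$ on coprime factors.

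Second, for each candidate $N=2^{p-1}(2^p-1)$ with $p$ prime, I would apply Theorem~\ref{E0} with $n:=2^{p-1}$. Since $2n-1 = 2\cdot 2^{p-1}-1 = 2^p-1$, the divisibility condition displayed in \eqref{E1} is identical to \eqref{E00}, hence equivalent to $2^p-1$ being a Mersenne prime. Chaining this equivalence with the Euclid--Euler equivalence yields the theorem. A minor but necessary remark is that $(1+\sqrt{3})^n + (1-\sqrt{3})^n$ is an integer (so the divisibility assertion is meaningful); this follows immediately from the binomial theorem because the odd powers of $\sqrt{3}$ cancel in the sum.

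I do not expect a substantive obstacle: the mathematical content sits inside Theorem~\ref{E0} and the Euclid--Euler theorem, and the present theorem simply packages them together via the identity $2\cdot 2^{p-1} - 1 = 2^p - 1$. The only point that deserves a sentence of justification, as noted above, is the parameter matching between \eqref{E00} and \eqref{E1}.
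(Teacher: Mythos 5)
Your proposal is correct and is precisely the intended derivation: the paper states Theorem~\ref{E} without proof, citing it as well known, and its content is exactly the chaining of the Euclid--Euler characterization of even perfect numbers with Theorem~\ref{E0} via the parameter identity $2n-1 = 2\cdot 2^{p-1}-1 = 2^p-1$ that you describe. The only caveat worth recording is that the Lucas--Lehmer criterion in the form \eqref{E00} holds only for odd primes $p$ (for $p=2$ one has $n=2$, $(1+\sqrt{3})^2+(1-\sqrt{3})^2=8$ and $3\nmid 8$, yet $6$ is perfect), a restriction the paper's statement omits and which your argument inherits from Theorem~\ref{E0} as cited.
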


	 \end{itemize}
 
 \subsection{The purpose of the paper} The aim of this paper is to study some arithmetical properties of the coefficients of the expansion 
 \begin{equation}
 	\label{E2} 
 x^n+y^n =
 \sum_{k=0}^{\lfloor{\frac{n}{2}}\rfloor} \Psi_k(n)
 (xy)^{\lfloor{\frac{n}{2}}\rfloor -k} (x^2+y^{2})^{k}
 \end{equation}

for $n \equiv 0, \: 2, \: 4, \: 6\pmod{8}$. Then we study the expansion 

 \begin{equation}
	\label{E3} 
	\frac{x^n+y^n}{x+y} =
	\sum_{k=0}^{\lfloor{\frac{n}{2}}\rfloor} \Psi_k(n)
	(xy)^{\lfloor{\frac{n}{2}}\rfloor -k} (x^2+y^{2})^{k}
\end{equation}
for $n \equiv 1, \: 3, \: 5, \: 7\pmod{8}$. Then we show that the numbers of the form 
  \begin{equation}
 	\label{E33} 
\prod\limits_{\lambda }^{} n^2 - (4 \lambda)^2
  \end{equation}
 arise up naturally in the coefficients of the expansions \eqref{E2}, \eqref{E3}, and enjoy some arithmetical properties.  We then be able to find three new expansions (three different versions) for Theorem \eqref{E0}, and a new expansion for Theorem \eqref{E}, which should give a better theoretical understanding for two major questions about integers; whether there are infinitely many Mersenne primes or Mersenne composites. We also prove four new combinatorial identities about the nature of integers.
 
\section{Summary for the main results of the paper}
	This paper proves all of the following new theorems for the primality of Mersenne numbers, even perfect numbers, proves a criterion for compositeness of Mersenne numbers, and gives four combinatorial identities related to the nature of numbers:

\begin{theorem}{(Lucas-Lehmer-Moustafa)}{(Version 1)}
	\label{Theorem of result-33T}\\
	Given prime $p \geq 5$. $2^p-1$ is prime  \textbf{ if and only if} 	
	\begin{equation}
		\label{Equation of result-33T} 
			2n-1 \quad  \vert \quad \sum_{\substack{k=0,\\ k \:even}}^{\lfloor{\frac{n}{2}}\rfloor}  \phi_k (n)
	\end{equation}
	where $n:=2^{p-1}$,  $\phi_k (n)$ are defined by the double index recurrence relation \[\phi_k (m)= 4 \: \phi_{k-1}(m-2) - \phi_k (m-4) \]
	and the initial boundary values satisfy 
	\begin{equation}
		\label{initial values - 1BBT} 
		\begin{aligned}	
			\phi_0(m) = 		
			\begin{cases}
				+2  &  m \equiv 0  \pmod{8}  \\   
				\: \: 0  &  m \equiv \pm 2  \pmod{8}  \\  
				-2  &  m \equiv 4  \pmod{8}  \\   
			\end{cases} 
			\qquad , \qquad 
			\phi_1(m) = 		
			\begin{cases}
				+ 2 \: m &  m \equiv 2  \pmod{8}  \\   
				\quad 0 &  m \equiv 0, \:4  \pmod{8}  \\   
				-2 \:  m  &  m \equiv 6  \pmod{8}  \\    
			\end{cases}, 
		\end{aligned} \\
	\end{equation}
	
\end{theorem}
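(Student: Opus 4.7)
My plan is to deduce the new criterion from the classical Lucas--Lehmer test (Theorem \ref{E0}) by specializing the expansion \eqref{E2} at the points $x = 1+\sqrt{3}$, $y = 1-\sqrt{3}$, and by showing that the double-indexed array $\phi_k(n)$ is merely the rescaling $4^k\Psi_k(n)$ of the coefficients appearing in \eqref{E2}. The whole argument will hinge on a single substitution in \eqref{E2} followed by the vanishing of the odd-index coefficients when $n$ is divisible by $4$.

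First I would observe that $xy = -2$ and $x^2+y^2 = 8$ at the chosen specialization. Since $p \geq 5$, the exponent $n = 2^{p-1}$ is divisible by $16$, so $\lfloor n/2\rfloor = n/2$ and $(-1)^{n/2}=1$. Substituting into \eqref{E2} and pulling out the common factor produces
\[
(1+\sqrt{3})^n+(1-\sqrt{3})^n \;=\; 2^{n/2}\sum_{k=0}^{n/2}(-1)^k\,4^k\,\Psi_k(n).
\]
Next, starting from the polynomial identity $(x^2+y^2)(x^{m-2}+y^{m-2}) = (x^m+y^m) + (xy)^2(x^{m-4}+y^{m-4})$, I would compare coefficients against \eqref{E2} and obtain the one-step recurrence $\Psi_k(m) = \Psi_{k-1}(m-2) - \Psi_k(m-4)$. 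Rescaling by $4^k$ converts this immediately into the double-index recurrence $\phi_k(m) = 4\phi_{k-1}(m-2) - \phi_k(m-4)$ with the identification $\phi_k(m) := 4^k\Psi_k(m)$, so the only remaining task is to match initial data. A direct evaluation---in essence the Eight Levels Theorem already established in the paper---confirms that $\Psi_0(m)$ and $4\Psi_1(m)$ realize the piecewise formulas for $\phi_0(m)$ and $\phi_1(m)$ of \eqref{initial values - 1BBT} across all four even residue classes modulo $8$.

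With the identification $\phi_k = 4^k\Psi_k$ in place, I would then show that $\Psi_k(n) = 0$ for every odd $k$ whenever $n \equiv 0 \pmod 4$. This follows from the more general parity statement ``$\Psi_k(m) = 0$ unless $k \equiv m/2 \pmod 2$,'' which can be proved by a straightforward induction on $m$ using the recurrence just derived, since the shifts $k \mapsto k-1$, $m \mapsto m-2$ and $k \mapsto k$, $m \mapsto m-4$ both preserve the parity of $k - m/2$. Applied to $n = 2^{p-1}$, this collapses the right-hand side of the previous display to $2^{n/2}\sum_{k\text{ even}}\phi_k(n)$. Because $2n-1$ is odd, $2^{n/2}$ is a unit modulo $2n-1$, so divisibility by $2n-1$ is preserved under dropping that factor, and the desired equivalence then follows directly from Theorem \ref{E0}.

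The main obstacle is the initial-data matching: one must check that the four piecewise formulas for $\phi_0(m)$ and $\phi_1(m)$ in \eqref{initial values - 1BBT} coincide with $\Psi_0(m)$ and $4\Psi_1(m)$ respectively in each of the relevant residue classes $m \bmod 8$. This case analysis encodes the congruence behavior of the leading and sub-leading symmetric expansion coefficients of $x^m+y^m$, and is precisely what the Eight Levels Theorem is designed to handle, so most of the heavy lifting will already be available before we reach Theorem \ref{Theorem of result-33T}.
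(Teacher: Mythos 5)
Your proposal is correct and follows essentially the same route as the paper: specialize the expansion at $x=1+\sqrt{3}$, $y=1-\sqrt{3}$ (so $xy=-2$, $x^2+y^2=8$), use the vanishing of the odd-index coefficients for $n\equiv 0\pmod 8$ to reduce $(1+\sqrt3)^n+(1-\sqrt3)^n$ to $2^{n/2}\sum_{k\ \mathrm{even}}4^k\Psi_k(n)$, strip the unit $2^{n/2}$ modulo $2n-1$, and identify $\phi_k=4^k\Psi_k$ together with the recurrence inherited from $\Psi_k(m)=\Psi_{k-1}(m-2)-\Psi_k(m-4)$. The only cosmetic difference is that you prove the parity vanishing by induction on the recurrence, whereas the paper gets it in one line from the substitution $x\mapsto -x$ and uniqueness of the coefficients.
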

	\begin{theorem}{(Lucas-Lehmer-Moustafa)}{(Version 2) }
	\label{Theorem of result-333T}\\
	Given prime $p \geq 5$,  $n:=2^{p-1}$. The number $2^p-1$ is prime  \textbf{ if and only if} 	
	\begin{equation}
		\label{Equation of result-333T} 
		2\:n -1  \quad  \vert \quad \sum_{\substack{k=0,\\ k \:even}}^{\lfloor{\frac{n}{2}}\rfloor}   \quad  (-1)^{\lfloor{\frac{k}{2}}\rfloor}	\: \frac {\prod\limits_{\lambda = 0}^{\lfloor{\frac{k}{2}}\rfloor -1} n^2 - (4 \lambda)^2}	{ k!}. 
	\end{equation}
\end{theorem}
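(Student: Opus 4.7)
The strategy is to reduce Theorem~\ref{Theorem of result-333T} to Theorem~\ref{Theorem of result-33T} by producing an explicit closed-form expression for the coefficients $\phi_k(n)$, for even $k$, in the case $n = 2^{p-1}$ with $p \geq 5$. Since $p \geq 5$ forces $n = 2^{p-1}$ to be divisible by $16$, we have in particular $n \equiv 0 \pmod{8}$, so the data in \eqref{initial values - 1BBT} give $\phi_0(n) = 2$ and $\phi_1(n) = 0$, and this is the residue class in which we need to understand $\phi_k(n)$.

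The closed form I would aim to prove is
\[
\phi_k(n) \;=\; 2\,(-1)^{k/2}\, \frac{\prod_{\lambda=0}^{k/2 - 1}\bigl(n^2 - (4\lambda)^2\bigr)}{k!}
\]
for every even $k$ with $0 \leq k \leq \lfloor n/2\rfloor$. Once this identity is in hand, Theorem~\ref{Theorem of result-333T} follows immediately from Theorem~\ref{Theorem of result-33T}: the sum on the right-hand side of \eqref{Equation of result-33T} is then exactly twice the sum on the right-hand side of \eqref{Equation of result-333T}, and because $2n - 1$ is odd, the two divisibility conditions are equivalent.

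The closed form would be established by a double induction on $k$, driven by the recurrence $\phi_k(m) = 4\,\phi_{k-1}(m-2) - \phi_k(m-4)$. This induction requires a companion closed-form expression for odd $k$ (of the same shape, but with an extra factor of $m$ and a shifted product), predicted by the initial value $\phi_1(m) = \pm 2m$. Both proposed formulas are polynomials in $m$, so the inductive step reduces to a purely algebraic identity: after expanding $\prod_{\lambda}\bigl((m-2)^2 - (4\lambda)^2\bigr)$ and $\prod_{\lambda}\bigl((m-4)^2 - (4\lambda)^2\bigr)$, the two sides of the recurrence are matched by a telescoping argument on the factors $m^2 - (4\lambda)^2$. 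The base cases $k = 0$ and $k = 1$ are exactly the initial data in \eqref{initial values - 1BBT}.

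The main obstacle I anticipate is the bookkeeping across the four residue classes $0, 2, 4, 6 \pmod{8}$: each application of the recurrence shifts $m$ by $2$ or $4$, cycling through these classes, so the polynomial identity must be interpreted with care in each case to ensure the signs match the prescribed initial values. Once these sign conventions are pinned down---and they can be, because the recurrence moves through the residue classes in a completely controlled way---the verification is a routine polynomial identity, and the passage from Version~1 to Version~2 is then immediate.
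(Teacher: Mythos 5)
Your proposal is correct and follows essentially the same route as the paper: the paper likewise obtains Version 2 from Version 1 by combining the closed form $\phi_k(n) = 2\,(-1)^{k/2}\prod_{\lambda=0}^{k/2-1}\bigl(n^2-(4\lambda)^2\bigr)/k!$ for $n\equiv 0 \pmod 8$ (its lemma of explicit formulas for the $\phi$-sequence, which is the Eight Levels Theorem rescaled by $4^k$ and proved by exactly the induction on the recurrence across the residue classes $0,2,4,6 \pmod 8$ that you describe) with the observation that $2n-1$ is odd, so the overall factor of $2$ may be cancelled. The only cosmetic difference is that the paper runs the induction on $\Psi_k(n)=\phi_k(n)/4^k$ rather than on $\phi_k(n)$ directly.
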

	\begin{theorem}{(Lucas-Lehmer-Moustafa)}{(Version 3)}
		\label{Theorem of result-303}\\
		Given prime $p \geq 5$. The number $2^p-1$ is prime \textbf{ if and only if} 	
		\begin{equation}
			\label{Equation of result-303} 
			2 \: n - 1  \quad  \vert \quad \sum_{\substack{k=0,\\ k \:even}}^{\lfloor{\frac{n}{2}}\rfloor}  \phi_k (n)
		\end{equation}
		where $n:=2^{p-1}$,  $\phi_k (n)$ are generated by the double index recurrence relation
		\[   \frac{\phi_{k}(n)}{\phi_{k-2}(n)} = - \: \frac{\: \:	n^2  \: - \: (2k-4)^2 \:}{\: k \: (k-1)},
		\]
		and we can choose either of the following initial values to generate $\phi_k(n)$ from the starting term $\phi_0(n)$ or the last term $\phi_{\lfloor{\frac{n}{2}}\rfloor }(n)$ :
		\begin{equation}
			\label{S22} 
				\phi_0(n) = +2 		
				\qquad , \qquad 
				\phi_{\lfloor{\frac{n}{2}}\rfloor }(n) = 4^{\lfloor{\frac{n}{2}}\rfloor}.
		\end{equation}

		\begin{theorem}{(Euclid-Euler-Lucas-Lehmer-Moustafa)}
			\label{Theorem of result-2}\\
			A number $N$ is even perfect number \textbf{ if and only if} $N=2^{p-1}(2^p-1)$ for some prime $p$, and 
			\begin{equation}
				\label{Equation of result-2} 
				2 \:n -1  \quad  \vert \quad \sum_{\substack{k=0,\\ k \:even}}^{\lfloor{\frac{n}{2}}\rfloor} (-1)^{\lfloor{\frac{k}{2}}\rfloor}
				\frac {\prod\limits_{\lambda = 0}^{\lfloor{\frac{k}{2}}\rfloor -1} n^2 - (4 \lambda)^2}	{k!}
			\end{equation}
			where $n:=2^{p-1}$.	
		\end{theorem}
	
		\begin{theorem}{(Criteria for compositeness of Mersenne numbers ) }
			\label{Criteria}
			Given prime $p \geq 5$. The number  $2n-1=2^p -1$ is Mersenne composite number if
			\begin{equation}
				\label{Equation of result-BB} 
					2 \: n \: - \: 1 \quad  \nmid \quad \sum_{\substack{k=0,\\ k \:even}}^{\lfloor{\frac{n}{2}}\rfloor}   \quad  	\: \frac {\prod\limits_{\lambda = 0}^{\lfloor{\frac{k}{2}}\rfloor -1}  [(4 \lambda)^2 \: - \: 4^{-1}] \quad}	{ k!}. 
			\end{equation}
		\end{theorem} 	 
		
	\begin{theorem}{(Four new combinatorial identities)}
		\label{E4}
		For any natural number $n$, the following combinatorial identities are correct
		\begin{equation*}
			\begin{array}{l  l l}	
				\underline{n \equiv 0 \pmod{4}}  &  \quad & 4^{\lfloor{\frac{n}{2}}\rfloor} \: (\lfloor{\frac{n}{2}}\rfloor)! = 	2 \:
				\:\prod\limits_{\lambda = 0}^{\lfloor{\frac{n-4}{4}}\rfloor } n^2 \: - \: (4 \lambda)^2 	\\[1.5mm]
				
				\underline{n \equiv 1 \pmod{4}}  &  \quad&  4^{\lfloor{\frac{n}{2}}\rfloor} \: (\lfloor{\frac{n}{2}}\rfloor)! = 	\:
				\:\prod\limits_{\lambda = 1}^{\lfloor{\frac{n-1}{4}}\rfloor } (n+1)^2 \: - \: (4 \lambda -2)^2 	  
				\\[1.5mm]	
				\underline{n \equiv 2 \pmod{4}}  &  \quad& 4^{\lfloor{\frac{n}{2}}\rfloor} \: (\lfloor{\frac{n}{2}}\rfloor)! = 2	\: \: n 
				\:\prod\limits_{\lambda = 1}^{\lfloor{\frac{n-2}{4}}\rfloor } n^2 \: - \: (4 \lambda -2)^2 	  		
				\\[1.5mm]
				\underline{n \equiv 3 \pmod{4}}  &  \quad& 4^{\lfloor{\frac{n}{2}}\rfloor} \: (\lfloor{\frac{n}{2}}\rfloor)! =  \: (n+1) 
				\:\prod\limits_{\lambda = 1}^{\lfloor{\frac{n-3}{4}}\rfloor } (n+1)^2 \: - \: (4 \lambda)^2 	  
				\\
			\end{array} \\
		\end{equation*}
	
	\end{theorem}

	\section{The eight levels theorem}	
		Now we prove what we call the Eight Levels Theorem for the expansion of the polynomial \[ \frac{x^n+y^n}{(x+y)^{\delta(n)}}\]	
		 in terms of the symmetric polynomials $xy$ and $x^2+y^2$. Then we investigate some properties for the coefficients of that expansions, and give some applications and prove my results in the summary.

	\end{theorem}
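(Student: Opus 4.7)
The plan is to reduce the claim to the classical Lucas-Lehmer criterion (Theorem~\ref{E0}) by substituting the Lehmer pair $(1+\sqrt 3,\,1-\sqrt 3)$ into the Eight Levels expansion that is about to be developed in Section~4. The classical statement asserts that $2n-1$, with $n = 2^{p-1}$, is prime if and only if it divides $(1+\sqrt 3)^n + (1-\sqrt 3)^n$. Because $p\geq 5$ forces $n\geq 16$, we are in the case $n\equiv 0\pmod{8}$, and the even-exponent branch of the Eight Levels Theorem applies to $x^n+y^n$.

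Step 1: I would substitute into the identity $x^n+y^n=\sum_{k=0}^{n/2}\Psi_k(n)(xy)^{n/2-k}(x^2+y^2)^k$ the values $xy=-2$ and $x^2+y^2=8$. This produces an expression of the form $2^{n/2}\sum_k(-1)^{n/2-k}4^k\Psi_k(n)$; since $\gcd(2,\,2n-1)=1$, the divisibility condition of Theorem~\ref{E0} is equivalent to $2n-1$ dividing $\sum_k \phi_k(n)$, where $\phi_k(n)$ is the indicated rescaling of $\Psi_k(n)$. The initial values of Theorem~\ref{Theorem of result-33T} for $n\equiv 0\pmod{8}$ give $\phi_0(n)=+2$ and $\phi_1(n)=0$; combined with the single-step recurrence $\phi_k(m)=4\phi_{k-1}(m-2)-\phi_k(m-4)$, the vanishing of $\phi_1(n)$ propagates along the diagonal $m=n$ and forces every odd-indexed $\phi_k(n)$ to vanish, justifying the restriction of the sum to even $k$ and recovering Version~1.

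Step 2: To upgrade to the present Version~3, I would iterate the single-step recurrence twice and solve the resulting two-step relation in closed form, obtaining $\phi_k(n)/\phi_{k-2}(n)=-(n^2-(2k-4)^2)/(k(k-1))$. Equivalently, one can first establish Version~2 by induction on $k$, matching the product $\prod_{\lambda=0}^{k/2-1}(n^2-(4\lambda)^2)/k!$ against the $\phi_k(n)$ produced by the single-step recurrence, and then read the ratio off the product formula directly. The alternative anchor $\phi_{n/2}(n)=4^{n/2}$ then comes from the top coefficient of the Eight Levels expansion: the $k=n/2$ term of $x^n+y^n$ in powers of $x^2+y^2$ has coefficient $1$ up to a routine binomial check, and the accumulated rescaling absorbs a factor of $4^{n/2}$. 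The consistency of the two anchors---the fact that iterating the ratio formula from either end produces the same sequence, so that the theorem's ``either--or'' clause is well-posed---is exactly the combinatorial identity of Theorem~\ref{E4} for $n\equiv 0\pmod{4}$, namely $4^{n/2}(n/2)!=2\prod_{\lambda=0}^{(n-4)/4}(n^2-(4\lambda)^2)$.

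The main obstacle is Step~2: showing that the difference recurrence inherited from the Eight Levels Theorem collapses, under two-step iteration, into the clean quadratic ratio of Version~3. This is essentially a symmetric-polynomial computation driven by the structure of $\Psi_k(n)$ in the Eight Levels expansion, and it is where the combinatorial identities become indispensable, since they supply the consistency check that lets either endpoint $\phi_0(n)$ or $\phi_{n/2}(n)$ serve as the generating seed. Once the ratio formula is in hand, the ``if and only if'' in Theorem~\ref{Theorem of result-303} is immediate from Theorem~\ref{E0}, because the reduction of Step~1 is a two-way implication.
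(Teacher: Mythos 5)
Your proposal follows essentially the same route as the paper: substitute $(1+\sqrt{3},\,1-\sqrt{3})$ into the Eight Levels expansion for $n\equiv 0\pmod 8$ so that the classical Lucas--Lehmer divisibility becomes $2n-1\mid 2^{\lfloor n/2\rfloor}\sum_{k\,\mathrm{even}}4^k\Psi_k(n)$, set $\phi_k(n)=4^k\Psi_k(n)$, and read the two-step ratio $-\bigl(n^2-(2k-4)^2\bigr)/\bigl(k(k-1)\bigr)$ and the anchors $\phi_0(n)=2$, $\phi_{\lfloor n/2\rfloor}(n)=4^{\lfloor n/2\rfloor}$ off the explicit product formulas (the paper gets the ratio from Theorem~\ref{Generating psi} applied to the closed forms, exactly your second suggested variant in Step~2, rather than by ``iterating the single-step recurrence,'' which by itself would not close up at fixed $n$). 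Your observation that consistency of the two anchors is equivalent to the $n\equiv 0\pmod 4$ combinatorial identity matches the paper's derivation of Theorem~\ref{New Combinatorial Identities} from $\Psi_{\lfloor n/2\rfloor}(n)=1$.
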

		\subsection{The statement of the eight levels theorem}
		\begin{theorem}{(The Eight Levels Theorem)}
		\label{Theorem of the 4 levels}  \\
		For any complex numbers $x,y$, any non negative integers $n,k$, the coefficients $\Psi_k(n)$ of the expansion  
		\begin{equation}
			\label{Equation of result-4} 
			\frac{x^n+y^n}{(x+y)^{\delta(n)}} =
			\sum_{k=0}^{\lfloor{\frac{n}{2}}\rfloor} \Psi_k(n)
			(xy)^{\lfloor{\frac{n}{2}}\rfloor -k} (x^2+y^{2})^{k}
		\end{equation}
		are integers and 
		
		\begin{equation}
			\label{starting} 
				\Psi_0(n) = 		
				\begin{cases}
					+2  &  n \equiv \:\: 0  \pmod{8}  \\   
					+	 1 &  n \equiv \pm 1  \pmod{8}  \\
					\: \: 0  &  n \equiv \pm 2  \pmod{8}  \\   
					-1&  n \equiv \pm 3  \pmod{8}  \\
					-2  &  n \equiv \pm 4  \pmod{8}  \\   
				\end{cases} 
		\end{equation}
			and, for each $ 1 \le k \le  \lfloor{\frac{n}{2}}\rfloor$, the coefficients satisfy the following statements 
	\begin{itemize}	
\item  For $n \equiv 0, \: 2, \: 4, \: 6\pmod{8}$:\\ 
\end{itemize}

	\underline{$n \equiv 0 \pmod{8}$} 
		\begin{equation*}
				 \Psi_k(n) =
				\begin{cases}
					0  & \mbox{for $k$ odd } \\   
					2 \: (-1)^{\lfloor{\frac{k}{2}}\rfloor}
					\: \frac {\prod\limits_{\lambda = 0}^{\lfloor{\frac{k}{2}}\rfloor -1} n^2 \: - \: (4 \lambda)^2}	{4^k \: k!} & \mbox{for $k$ even } \end{cases}  				
			\end{equation*}
		
		\underline{$n \equiv 2 \pmod{8} $} \\
	\begin{equation*}
				\Psi_k(n) =
				\begin{cases}
					0  & \mbox{for $k$ even } \\   
					2 \: (-1)^{{\lfloor{\frac{k}{2}}\rfloor}} \:
					n \:	\frac {\prod\limits_{\lambda = 1}^{\lfloor{\frac{k}{2}}\rfloor} n^2 \: - \: (4 \lambda -2)^2}	{4^k \: k!} & \mbox{for $k$ odd } \end{cases}   
			\end{equation*}
					
	\underline{$n \equiv 4 \pmod{8}$} \\	
	\begin{equation*}	
		\Psi_k(n) =
		\begin{cases}
			0  & \mbox{for $k$ odd } \\   
			2 \: (-1)^{\lfloor{\frac{k}{2}}\rfloor +1}  \:
			\frac {\prod\limits_{\lambda = 0}^{\lfloor{\frac{k}{2}}\rfloor -1} n^2 \: - \: (4 \lambda)^2}	{4^k \: k!} & \mbox{for $k$ even } \end{cases}   
	\end{equation*}	
		
		\underline{$n \equiv 6 \pmod{8}$}  \\
		\begin{equation*}
			\Psi_k(n) =
			\begin{cases}
				0  & \mbox{for $k$ even } \\   
				2 \: (-1)^{\lfloor{\frac{k}{2}}\rfloor +1}
				\:  n \: \frac {\prod\limits_{\lambda = 1}^{\lfloor{\frac{k}{2}}\rfloor } n^2 \: - \: (4 \lambda -2)^2}	{4^k \: k!} & \mbox{for $k$ odd } \end{cases}   
		\end{equation*}
	
		\begin{itemize}	
		\item  For $n \equiv 1, \: 3, \: 5, \: 7\pmod{8}$:\\ 
	\end{itemize}
	
	\underline{$n \equiv 1 \pmod{8}$} 	
	\begin{equation*}
		\Psi_k(n) = (-1)^{\lfloor{\frac{k}{2}}\rfloor} (n+1-2k)^{\delta(k)}  \:
		\frac {\prod\limits_{\lambda = 1}^{\lfloor{\frac{k}{2}}\rfloor} (n+1)^2 \: - \: (4 \lambda -2)^2}	{4^k \: k!}  
	\end{equation*}

\underline{$n \equiv 3 \pmod{8}$}  \\		
	\begin{equation*}
				 \Psi_k(n) = (-1)^{\lfloor{\frac{k}{2}}\rfloor + \delta(k-1)} (n+1) (n+1-2k)^{\delta(k-1)}  
				\frac {\prod\limits_{\lambda = 1}^{\lfloor{\frac{k-1}{2}}\rfloor} (n+1)^2 - (4 \lambda)^2}	{4^k \: k!}  
	\end{equation*}		

			\underline{$n \equiv 5 \pmod{8}$ }  \\
			\begin{equation*}
				\Psi_k(n) = (-1)^{\lfloor{\frac{k}{2}}\rfloor +1}  \: (n+1-2k)^{\delta(k)} \:
				\frac {\prod\limits_{\lambda = 1}^{\lfloor{\frac{k}{2}}\rfloor }(n+1)^2 \: - \: (4 \lambda -2)^2}	{4^k \: k!}  
			\end{equation*}
		
	\underline{$n \equiv 7 \pmod{8}$} \\
			\begin{equation*}
				\Psi_k(n) = (-1)^{\lfloor{\frac{k}{2}}\rfloor + \delta(k)} (n+1) (n+1-2k)^{\delta(k-1)}  \:
				\frac {\prod\limits_{\lambda = 1}^{\lfloor{\frac{k-1}{2}}\rfloor } (n+1)^2 - (4 \lambda)^2}	{4^k \: k!}  \\ 
			\end{equation*}
		
	\end{theorem}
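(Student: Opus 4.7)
The plan is to proceed by strong induction on $n$, driven by the classical linear recurrence satisfied by the normalized power sums $R_n := (x^n+y^n)/(x+y)^{\delta(n)}$. Since $x^2$ and $y^2$ are the roots of $z^2-(x^2+y^2)z+(xy)^2$, and since $x+y$ cancels cleanly on both sides in the odd case, one has
\[
R_n \;=\; (x^2+y^2)\,R_{n-2}\;-\;(xy)^2\,R_{n-4},\qquad n\ge 4,
\]
uniformly for even and odd $n$. Substituting the symbolic Ansatz $R_n=\sum_k\Psi_k(n)\,s^{\lfloor n/2\rfloor-k}t^k$ with $s=xy$, $t=x^2+y^2$, and matching the coefficient of $s^{\lfloor n/2\rfloor-k}t^k$ on both sides, yields the double-index recurrence
\[
\Psi_k(n)\;=\;\Psi_{k-1}(n-2)\;-\;\Psi_k(n-4),
\]
with the standing conventions $\Psi_{-1}(\cdot)=0$ and $\Psi_j(m)=0$ for $j>\lfloor m/2\rfloor$. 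In particular, integrality of every $\Psi_k(n)$ drops out for free from integrality of the $\Psi_\cdot(n')$ with $n'<n$.

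I would then establish the base cases by direct expansion of $R_0,R_1,\ldots,R_7$ in terms of $s$ and $t$, one representative from each residue class modulo $8$; these immediately reproduce the claimed values of $\Psi_0(n)$ in \eqref{starting} and, for the few nonzero higher coefficients that appear, match the closed forms in each of the eight cases. It is also worth recording the boundary check $\Psi_0(n)=-\Psi_0(n-4)$ forced by the recurrence at $k=0$, which is visibly satisfied by the table \eqref{starting} since the classes $r$ and $r+4\pmod 8$ always carry opposite signs.

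The inductive step is the heart of the argument: fix a residue class $r\in\{0,1,\ldots,7\}$ modulo $8$; assuming the closed formulas hold for $n-2$ (class $r-2$) and $n-4$ (class $r-4$), substitute them into $\Psi_k(n)=\Psi_{k-1}(n-2)-\Psi_k(n-4)$ and verify that the right-hand side collapses to the formula claimed for class $r$. The algebra reduces to a telescoping identity for products of the shape $\prod_{\lambda}\bigl(m^2-(4\lambda)^2\bigr)$ and $\prod_{\lambda}\bigl(m^2-(4\lambda-2)^2\bigr)$ at three consecutive shifts $m\in\{n,n-2,n-4\}$, combined with normalization of the common factor $1/(4^k k!)$ and careful tracking of the $(-1)^{\lfloor k/2\rfloor}$ signs and the $(n+1-2k)^{\delta(k)}$, $(n+1-2k)^{\delta(k-1)}$ toggles that distinguish the even subcases from the odd subcases. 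Because all eight cases are structurally parallel, I would present one of them (say $n\equiv 0\pmod 8$) in full detail and indicate how the remaining seven verifications follow along the same lines.

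The main obstacle is bookkeeping rather than any deep new idea: keeping track of how the parity of $k$ (which toggles the vanishing versus nonvanishing branch of each formula) interacts with $n\bmod 8$ as the recurrence shifts $n\mapsto n-2,\,n-4$, and in particular handling the boundary indices $k=\lfloor n/2\rfloor$ and $k=\lfloor n/2\rfloor-1$ where $\Psi_k(n-4)$ vanishes by range truncation. Once a uniform rising-factorial notation is introduced to normalize the products $\prod_\lambda\bigl(m^2-(4\lambda)^2\bigr)$, each of the eight verifications collapses to a single elementary factorization check; the real labor is organizing the case matrix cleanly rather than doing anything algebraically delicate.
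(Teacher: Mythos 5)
Your proposal is correct, and for the four even residue classes it is essentially identical to the paper's argument: the paper derives the same double-index recurrence $\Psi_k(n)=\Psi_{k-1}(n-2)-\Psi_k(n-4)$ (its Lemma~\ref{lemma one}, obtained by multiplying the expansion by $xy(x^2+y^2)$ rather than by quoting $R_n=(x^2+y^2)R_{n-2}-(xy)^2R_{n-4}$ directly, but this is the same identity), gets integrality and uniqueness from the fundamental theorem on symmetric functions and the algebraic independence of $xy$ and $x^2+y^2$, checks the base cases $n=0,2,4,6$, uses the $x\mapsto -x$ symmetry to kill the wrong-parity coefficients, and then runs exactly the telescoping verification you describe for $n\equiv 0,2,4,6\pmod 8$. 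Where you genuinely diverge is in the odd residue classes: you propose to close those by the same recurrence-based induction (self-contained among $n\equiv 1,3,5,7\pmod 8$ with base cases $n=1,3,5,7$), whereas the paper instead applies the operator $\partial_x+\partial_y$ to the expansion of $x^{n+1}+y^{n+1}$ to obtain $\Psi_k(n)=\tfrac{2(k+1)}{n+1}\Psi_{k+1}(n+1)+\tfrac{\lfloor (n+1)/2\rfloor-k}{n+1}\Psi_k(n+1)$; since $n+1$ is even, one of the two terms always vanishes by the parity lemma, and each odd-class formula falls out of the already-proved even-class formula in one line. Both routes work --- I checked that your telescoping does close in the odd cases (e.g.\ for $n\equiv 1$, $k$ even, the bracket collapses via $4k+(n-1-2k)=n-1+2k$) --- but the paper's differential-operator transfer avoids redoing the product bookkeeping for the four odd classes, where the factors are centered at $n\pm 1$ and the extra $(n+1-2k)^{\delta(k)}$ toggles make the cancellation noticeably messier; your version buys uniformity of treatment at the cost of four more case verifications. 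One small point to make explicit in your write-up: matching coefficients of $s^{\lfloor n/2\rfloor-k}t^k$ presupposes that such an expansion exists and is unique, which should be justified (as the paper does) by symmetric-function theory and the algebraic independence of $xy$ and $x^2+y^2$, though existence also follows inductively from your own recurrence once the base cases are expanded by hand.
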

\begin{proof}

Put $(x,y)=(1, \sqrt{-1})$ in \eqref{Equation of result-4}, we immediately get \eqref{starting}. To prove the statements of Theorem \eqref{Theorem of the 4 levels}, we need to prove the following lemmas. 
	\begin{lemma}
		\label{lemma one}  
		For each natural number $n$, $ 1 \le k \le  \lfloor{\frac{n}{2}}\rfloor$, the coefficients  $\Psi_k(n)$ of the expansion of \eqref{Equation of result-4} are integers, unique and  and satisfy
		\begin{equation}
			\label{Equation of result-B} 
			\Psi_k(n) = \Psi_{k-1}(n-2)   -    \Psi_k(n-4).       	
		\end{equation}
	\end{lemma}
	\begin{proof} From the Fundamental Theorem on Symmetric Polynomials, \cite{Perron},  \cite{1}, we have a sequence of integers $\Psi_k(n)$ satisfy the expansion of \eqref{Equation of result-4}. From the algebraic independence of $xy, x^2+y^2$,
		the coefficients  $\Psi_k(n)$ of the expansion of \eqref{Equation of result-4} are unique. Now, multiply \eqref{Equation of result-4} by $xy(x^2+y^2)$, and noting $ (x+y)^{\delta(n+2)} =  (x+y)^{\delta(n)} =    (x+y)^{\delta(n-2)} $ and $ \lfloor{\frac{n+2}{2}}\rfloor = \lfloor{\frac{n}{2}}\rfloor +1$ and $ \lfloor{\frac{n-2}{2}}\rfloor = \lfloor{\frac{n}{2}}\rfloor -1 $ we get \begin{equation}
			\label{Equation of result-4A} 
			\begin{array}{cc}
				& \sum_{k=1}^{\lfloor{\frac{n}{2}}\rfloor +1} \Psi_{k-1}(n) 	(xy)^{\lfloor{\frac{n}{2}}\rfloor +2 -k} (x^2+y^{2})^{k} = \\
				& \sum_{k=0}^{\lfloor{\frac{n}{2}}\rfloor +1} \Psi_k(n+2)
				(xy)^{\lfloor{\frac{n}{2}}\rfloor +2 -k} (x^2+y^{2})^{k} + 
				\sum_{k=0}^{\lfloor{\frac{n}{2}}\rfloor -1} \Psi_k(n-2)
				(xy)^{\lfloor{\frac{n}{2}}\rfloor +2 -k} (x^2+y^{2})^{k}
			\end{array}
		\end{equation}
		Again from the algebraic independence of $xy, x^2+y^2$, and from \eqref{Equation of result-4A}, we get the following identity for any natural number $n$ 
		
	\begin{equation}
		\label{eq}	
		\Psi_{k}(n+2) =	\Psi_{k-1}(n) - \Psi_{k}(n-2).		
	\end{equation}
		
		Replace $n$ by $n-2$ in \eqref{eq}, we get \eqref{Equation of result-B}.
				
	\end{proof}

	\begin{lemma}
		\label{lemma two}  
		For every even natural number $n$, $ 0 \le k \le  \lfloor{\frac{n}{2}}\rfloor$, the following statements are true for each case:
		\begin{equation}
			\begin{array}{l c ll}	
				\underline{n \equiv 0 \: \text {or} \: 4 \pmod{8}}  &  \quad & \Psi_k(n) = 0    \quad \text{for}  \: k \: \text{odd}, \\
				\underline{n \equiv 2 \: \text {or} \: 6 \pmod{8}}  &  \quad & \Psi_k(n) = 0    \quad \text{for}  \: k \: \text{even}. \\
			\end{array}      	
		\end{equation}
	\end{lemma}
	\begin{proof}
		Consider $n$ even natural number, and replace $\delta(n)=0$ in \eqref{Equation of result-4} to get 
		\begin{equation}
			\label{Equation of result-4B} 
			x^n+y^n =
			\sum_{k=0}^{\lfloor{\frac{n}{2}}\rfloor} \Psi_k(n)
			(xy)^{\lfloor{\frac{n}{2}}\rfloor -k} (x^2+y^{2})^{k}
		\end{equation}
		Then replace $x$ by $-x$ in \eqref{Equation of result-4B}, we get  
		\begin{equation}
			\label{Equation of result-4C} 
			x^n+y^n =
			\sum_{k=0}^{\lfloor{\frac{n}{2}}\rfloor} \Psi_k(n)
			(-xy)^{\lfloor{\frac{n}{2}}\rfloor -k} (x^2+y^{2})^{k}
		\end{equation}
		From the algebraic independence of $xy, x^2+y^2$, and from \eqref{Equation of result-4B}, \eqref{Equation of result-4C} we get the proof.
	\end{proof}
	From \eqref{Equation of result-4}, we get the following initial  information about  $\Psi_k(n)$.
	\begin{lemma}
		\label{Equation of result-4D} 
		\begin{equation*}
			\begin{array}{l l ll}
				\Psi_0(0) = +2, & \Psi_1(0) = 0,  & \Psi_2(0) =0, & \Psi_3(0) = 0,  \\  
				\Psi_0(2) = 0, & \Psi_1(2) = +1,  & \Psi_2(2) = 0, & \Psi_3(2) = 0,  \\  
				\Psi_0(4) = -2, & \Psi_1(4) = 0,  & \Psi_2(4) = +1, & \Psi_3(4) = 0,  \\  
				\Psi_0(6) =0, & \Psi_1(6) = -3,  & \Psi_2(6) = 0, & \Psi_3(6) = +1.  \\  
			\end{array} \\
		\end{equation*}
	\end{lemma}
	
	\begin{lemma}
		\label{lemma odd}  
		For any odd natural number $n$, the coefficients  $\Psi_k(n)$ of the expansion of \eqref{Equation of result-4} satisfy the following property
		\begin{equation}
			\label{equation of the odd} 
			\Psi_k(n) = \frac{2(k+1)}{(n+1)}\Psi_{k+1}(n+1) + \frac{\big(\lfloor{\frac{n+1}{2}}\rfloor -k\big)}{(n+1)}    \Psi_k(n+1)       	
		\end{equation}
	\end{lemma}
	\begin{proof} Consider $n$ odd, then $n+1$ even. Then from the expansion of \eqref{Equation of result-4} we get the following 
		\begin{equation}
			\label{expansion of n+1} 
			x^{n+1}+y^{n+1} =
			\sum_{k=0}^{\lfloor{\frac{n+1}{2}}\rfloor} \Psi_k(n+1)
			(xy)^{\lfloor{\frac{n+1}{2}}\rfloor -k} (x^2+y^{2})^{k}
		\end{equation}	
		
		Now acting  the differential operator $  ( \frac{{\partial} }{\partial x} +   \frac{{\partial} }{\partial y}) $ on \eqref{expansion of n+1} and noting that 
		\begin{equation*}
			\begin{aligned}
				( \frac{{\partial} }{\partial x} +   \frac{{\partial} }{\partial y}) ( x^{n+1}+y^{n+1}  )    &= (n+1)(x^{n}+y^{n} ),  \\ 
				( \frac{{\partial} }{\partial x} +   \frac{{\partial} }{\partial y}) xy    &= x+y,  \\ 
				( \frac{{\partial} }{\partial x} +   \frac{{\partial} }{\partial y}) ( x^{2}+y^{2}  )    &= 2(x +y ),  \\ 						
			\end{aligned}
		\end{equation*}
		and equating the coefficients, we get the proof. 
	\end{proof}
	
	\subsection{The proof of Theorem \eqref{Theorem of the 4 levels} for $n \equiv 0, \: 2, \: 4, \: 6\pmod{8}$}
	
	Now in this section we prove Theorem \eqref{Theorem of the 4 levels} for $n$ even. The values of  $\Psi_k(n)$  that comes from the formulas of Theorem \eqref{Theorem of the 4 levels} for $n = 0, ,2,4,8$ are identical with the correct values that come from Lemma \eqref{Equation of result-4D}. So, Theorem \eqref{Theorem of the 4 levels} is correct for the initial vales $n=0,2,4,6$. Now we assume the validity of Theorem \eqref{Theorem of the 4 levels} for each $ 0 \le m \: \textless \: n$, with $m \equiv 0, \: 2, \: 4, \: 6\pmod{8}$ and need to prove that Theorem \eqref{Theorem of the 4 levels} for $n$. Lemma \eqref{lemma two} proves the validity of Theorem \eqref{Theorem of the 4 levels} for $n \equiv 0 \: \text {or} \: 4 \pmod{8}$ if $k$ odd, and for $n \equiv 2 \: \text {or} \: 6 \pmod{8}$ if $k$ even. Therefore it remains to prove the validity of the following cases: 
	
	\begin{itemize}
		\item  $ n \equiv 0  \pmod{8} , k \text \: {even}$
		\item  $ n \equiv 2  \pmod{8} , k \text \: {odd}$
		\item  $ n \equiv 4  \pmod{8} , k \text \: {even}$
		\item  $ n \equiv 6  \pmod{8} , k \text \: {odd}$
	\end{itemize}
	We prove these cases one by one as following. \\   
	\underline{ Consider $ n \equiv 0  \pmod{8} \: \text{and} \: k \text \: {even}$} \\ 
	\begin{proof}
		In this case, $ n -2 \equiv 6  \pmod{8}$ and $ n -4 \equiv 4  \pmod{8} $ and from Lemma \eqref{lemma one}, we get
		\begin{equation*}
			\begin{array}{lll}
				\Psi_k(n) &= \Psi_{k-1}(n-2)   -    \Psi_k(n-4)  \\
				&= 2 \: (-1)^{\lfloor{\frac{k-1}{2}}\rfloor +1}
				\:  (n-2) \: \frac {\prod\limits_{\lambda = 1}^{\lfloor{\frac{k-1}{2}}\rfloor } (n-2)^2 - (4 \lambda -2)^2}	{4^{k-1} \: (k-1)!}  - 2 \: (-1)^{\lfloor{\frac{k}{2}}\rfloor +1}  \:
				\frac {\prod\limits_{\lambda = 0}^{\lfloor{\frac{k}{2}}\rfloor -1} (n-4)^2 - (4 \lambda)^2}	{4^k \: k!} \\
				&= 2 \: (-1)^{\lfloor{\frac{k}{2}}\rfloor }
				\:  (n-2) \: \frac {\prod\limits_{\lambda = 1}^{\lfloor{\frac{k}{2}}\rfloor -1 } (n - 4 \lambda) \:  \prod\limits_{\lambda = 1}^{\lfloor{\frac{k}{2}}\rfloor -1 } (n + 4 \lambda -4)     }	{4^{k-1} \: (k-1)!} 
				+ 2 \: (-1)^{\lfloor{\frac{k}{2}}\rfloor }
				\:  \frac {\prod\limits_{\lambda = 0}^{\lfloor{\frac{k}{2}}\rfloor -1 } (n - 4 \lambda -4) \:  \prod\limits_{\lambda = 0}^{\lfloor{\frac{k}{2}}\rfloor -1 } (n + 4 \lambda -4)     }	{4^{k-1} \: (k-1)!}  \\
				&= 2 \:\frac{(-1)^{\lfloor{\frac{k}{2}}\rfloor }}{4^k \: k!}  \prod\limits_{\lambda = 1}^{\lfloor{\frac{k}{2}}\rfloor -1 } (n - 4 \lambda) \:  \prod\limits_{\lambda = 0}^{\lfloor{\frac{k}{2}}\rfloor -2 } (n + 4 \lambda ) \: \big[  4k(n-2) + (n-4 \lfloor{\frac{k}{2}}\rfloor )(n-4)   \big]	\\	
				&= 2 \:\frac{(-1)^{\lfloor{\frac{k}{2}}\rfloor }}{4^k \: k!}  \prod\limits_{\lambda = 1}^{\lfloor{\frac{k}{2}}\rfloor -1 } (n - 4 \lambda) \:  \prod\limits_{\lambda = 0}^{\lfloor{\frac{k}{2}}\rfloor -2 } (n + 4 \lambda ) \quad n \: (  n+ 4 (\lfloor{\frac{k}{2}}\rfloor -1))\\	
				&= 2 \:\frac{(-1)^{\lfloor{\frac{k}{2}}\rfloor }}{4^k \: k!}  \prod\limits_{\lambda = 0}^{\lfloor{\frac{k}{2}}\rfloor -1 } (n - 4 \lambda) \:  \prod\limits_{\lambda = 0}^{\lfloor{\frac{k}{2}}\rfloor -1 } (n + 4 \lambda ) =  2 \: (-1)^{\lfloor{\frac{k}{2}}\rfloor}
				\: \frac {\prod\limits_{\lambda = 0}^{\lfloor{\frac{k}{2}}\rfloor -1} n^2 - (4 \lambda)^2}	{4^k \: k!} \\
			\end{array} \\
		\end{equation*}
	\end{proof}
	\underline{ Consider $ n \equiv 2  \pmod{8} \: \text{and} \: k \text \: {odd}$} \\ 
	\begin{proof}
		In this case, $ n -2 \equiv 0  \pmod{8}$ and $ n -4 \equiv 6  \pmod{8} $ and from Lemma \eqref{lemma one}, we get
		
		\begin{equation*}
			\begin{array}{lll}
				\Psi_k(n) &= \Psi_{k-1}(n-2)   -    \Psi_k(n-4)  \\
				&= 2 \: (-1)^{\lfloor{\frac{k-1}{2}}\rfloor }
				\: \frac {\prod\limits_{\lambda = 0}^{\lfloor{\frac{k-1}{2}}\rfloor -1 } (n-2)^2 - (4 \lambda)^2}	{4^{k-1} \: (k-1)!}  - 2 \: (-1)^{\lfloor{\frac{k}{2}}\rfloor +1}  \:
				(n-4)\frac {\prod\limits_{\lambda = 1}^{\lfloor{\frac{k-1}{2}}\rfloor } (n-4)^2 - (4 \lambda-2)^2}	{4^k \: k!} \\
				
				&= 2 \: (-1)^{\lfloor{\frac{k}{2}}\rfloor }
				\:  \frac {\prod\limits_{\lambda = 0}^{\lfloor{\frac{k}{2}}\rfloor -1 } (n - 4 \lambda -2) \:  \prod\limits_{\lambda = 0}^{\lfloor{\frac{k}{2}}\rfloor -1 } (n + 4 \lambda -2)     }	{4^{k-1} \: (k-1)!} 
				+ 2 \: (-1)^{\lfloor{\frac{k}{2}}\rfloor }
				\: (n-4) \frac {\prod\limits_{\lambda = 1}^{\lfloor{\frac{k}{2}}\rfloor } (n - 4 \lambda -2) \:  \prod\limits_{\lambda = 1}^{\lfloor{\frac{k}{2}}\rfloor  } (n + 4 \lambda -6)     }	{4^{k} \: k!}  \\
				&= 2 \:\frac{(-1)^{\lfloor{\frac{k}{2}}\rfloor }}{4^k \: k!} \prod\limits_{\lambda = 1}^{\lfloor{\frac{k}{2}}\rfloor -1 } (n - 4 \lambda -2) \:  \prod\limits_{\lambda = 0}^{\lfloor{\frac{k}{2}}\rfloor -1 } (n + 4 \lambda -2 ) \big[ 4k(n-2) +  (n-4)(n-4(\lfloor{\frac{k}{2}}\rfloor)   -2)   \big]	\\				
			\end{array} \\
		\end{equation*}
		As \[  4k(n-2) +  (n-4)(n-4(\lfloor{\frac{k}{2}}\rfloor)   -2 ) = n (n+2k -4),     \]
		we get the following 
		\begin{equation*}
			\begin{aligned}	
				\Psi_k(n) &= 2 \:\frac{(-1)^{\lfloor{\frac{k}{2}}\rfloor }}{4^k \: k!} \: n \: \prod\limits_{\lambda = 1}^{\lfloor{\frac{k}{2}}\rfloor } (n - 4 \lambda + 2) \:  \prod\limits_{\lambda = 1}^{\lfloor{\frac{k}{2}}\rfloor } (n + 4 \lambda -2) 	\\	
				&= 2 \: n \: (-1)^{\lfloor{\frac{k}{2}}\rfloor}
				\: \quad \frac {\prod\limits_{\lambda = 1}^{\lfloor{\frac{k}{2}}\rfloor } n^2 - (4 \lambda -2)^2}	{4^k \: k!} \\
			\end{aligned} \\
		\end{equation*}
	\end{proof}
	\underline{ Consider $ n \equiv 4  \pmod{8} \: \text{and} \: k \text \: {even}$} \\ 
	\begin{proof}
		In this case, $ n -2 \equiv 2  \pmod{8}$ and $ n - 4 \equiv 0  \pmod{8} $ and from Lemma \eqref{lemma one}, we get
		\begin{equation*}
			\begin{array}{lll}
				\Psi_k(n) &= \Psi_{k-1}(n-2)   -    \Psi_k(n-4)  \\
				&= 2 \: (-1)^{\lfloor{\frac{k-1}{2}}\rfloor }
				\:  (n-2) \: \frac {\prod\limits_{\lambda = 1}^{\lfloor{\frac{k-2}{2}}\rfloor } (n-2)^2 - (4 \lambda -2)^2}	{4^{k-1} \: (k-1)!}  - 2 \: (-1)^{\lfloor{\frac{k}{2}}\rfloor }  \:
				\frac {\prod\limits_{\lambda = 0}^{\lfloor{\frac{k}{2}}\rfloor -1} (n-4)^2 - (4 \lambda)^2}	{4^k \: k!} \\
				
				&= 2 \: (-1)^{\lfloor{\frac{k}{2}}\rfloor +1}
				\:  (n-2) \: \frac {\prod\limits_{\lambda = 1}^{\lfloor{\frac{k}{2}}\rfloor -1 } (n - 4 \lambda) \:  \prod\limits_{\lambda = 1}^{\lfloor{\frac{k}{2}}\rfloor -1 } (n + 4 \lambda -4)     }	{4^{k-1} \: (k-1)!} 
				+ 2 \: (-1)^{\lfloor{\frac{k}{2}}\rfloor +1 }
				\:  \frac {\prod\limits_{\lambda = 0}^{\lfloor{\frac{k}{2}}\rfloor -1 } (n - 4 \lambda -4) \:  \prod\limits_{\lambda = 0}^{\lfloor{\frac{k}{2}}\rfloor -1 } (n + 4 \lambda -4)     }	{4^k \: k!}  \\
			\end{array} \\
		\end{equation*}
		Hence 		
		\begin{equation*}
			\begin{aligned}	
				\Psi_k(n)	&= 2 \:\frac{(-1)^{\lfloor{\frac{k}{2}}\rfloor +1}}{4^k \: k!}  \prod\limits_{\lambda = 1}^{\lfloor{\frac{k}{2}}\rfloor -1 } (n - 4 \lambda) \:  \prod\limits_{\lambda = 0}^{\lfloor{\frac{k}{2}}\rfloor -2 } (n + 4 \lambda ) \quad n \: (  n+ 4 (\lfloor{\frac{k}{2}}\rfloor -1))\\	
				&= 2 \:\frac{(-1)^{\lfloor{\frac{k}{2}}\rfloor +1}}{4^k \: k!}  \prod\limits_{\lambda = 0}^{\lfloor{\frac{k}{2}}\rfloor -1 } (n - 4 \lambda) \:  \prod\limits_{\lambda = 0}^{\lfloor{\frac{k}{2}}\rfloor -1 } (n + 4 \lambda ) =  2 \: (-1)^{\lfloor{\frac{k}{2}}\rfloor +1}
				\: \frac {\prod\limits_{\lambda = 0}^{\lfloor{\frac{k}{2}}\rfloor -1} n^2 - (4 \lambda)^2}	{4^k \: k!} \\			
			\end{aligned} \\
		\end{equation*}
	\end{proof}
	\underline{ Consider $ n \equiv 6  \pmod{8} \: \text{and} \: k \text \: {odd}$} \\ 
	\begin{proof}
		In this case, $ n -2 \equiv 4  \pmod{8}$ and $ n -4 \equiv 2  \pmod{8} $ and from Lemma \eqref{lemma one}, we get
		\begin{equation*}
			\begin{array}{lll}
				\Psi_k(n) &= \Psi_{k-1}(n-2)   -    \Psi_k(n-4)  \\
				
				&= 2 \: (-1)^{\lfloor{\frac{k-1}{2}}\rfloor +1}
				\: \frac {\prod\limits_{\lambda = 0}^{\lfloor{\frac{k-1}{2}}\rfloor -1 } (n-2)^2 - (4 \lambda)^2}	{4^{k-1} \: (k-1)!}  - 2 \: (-1)^{\lfloor{\frac{k}{2}}\rfloor }  \:
				(n-4)\frac {\prod\limits_{\lambda = 1}^{\lfloor{\frac{k-1}{2}}\rfloor } (n-4)^2 - (4 \lambda-2)^2}	{4^k \: k!} \\
				&= 2 \: (-1)^{\lfloor{\frac{k}{2}}\rfloor +1}
				\:  \frac {\prod\limits_{\lambda = 0}^{\lfloor{\frac{k}{2}}\rfloor -1 } (n - 4 \lambda -2) \:  \prod\limits_{\lambda = 0}^{\lfloor{\frac{k}{2}}\rfloor -1 } (n + 4 \lambda -2)     }	{4^{k-1} \: (k-1)!} 
				+ 2 \: (-1)^{\lfloor{\frac{k}{2}}\rfloor +1}
				\: (n-4) \frac {\prod\limits_{\lambda = 1}^{\lfloor{\frac{k}{2}}\rfloor } (n - 4 \lambda -2) \:  \prod\limits_{\lambda = 1}^{\lfloor{\frac{k}{2}}\rfloor  } (n + 4 \lambda -6)     }	{4^{k} \: k!}  \\	
			\end{array} \\
		\end{equation*}	
		Hence 	
		\begin{equation*}
			\begin{aligned}	
				\Psi_k(n) &= 2 \:\frac{(-1)^{\lfloor{\frac{k}{2}}\rfloor +1}}{4^k \: k!} \: n \: \prod\limits_{\lambda = 1}^{\lfloor{\frac{k}{2}}\rfloor } (n - 4 \lambda + 2) \:  \prod\limits_{\lambda = 1}^{\lfloor{\frac{k}{2}}\rfloor } (n + 4 \lambda -2) 	\\	
				&= 2 \: n \: (-1)^{\lfloor{\frac{k}{2}}\rfloor +1}
				\: \quad \frac {\prod\limits_{\lambda = 1}^{\lfloor{\frac{k}{2}}\rfloor } n^2 - (4 \lambda -2)^2}	{4^k \: k!} \\
			\end{aligned} \\
		\end{equation*}
	\end{proof}
	
	\subsection{The proof of Theorem \eqref{Theorem of the 4 levels} for $n \equiv 1, \: 3, \: 5, \: 7\pmod{8}$}
	With the help of Lemma \eqref{lemma odd}, we rely on Theorem \eqref{Theorem of the 4 levels} for the even case that we already proved, together with Lemma \eqref{lemma two}, to prove Theorem \eqref{Theorem of the 4 levels} for the odd cases $n \equiv 1, \: 3, \: 5, \: 7\pmod{8},$ one by one for each parity for $k$.\\
	
	\underline{Consider $ n \equiv 1  \pmod{8} \: \text{and} \: k \text \: {odd}$} 
	\begin{proof}
		In this case, $ n +1 \equiv 2  \pmod{8}$ and from Lemma \eqref{lemma two}, we get $ \Psi_{k+1}(n+1)=0$. Hence, from Lemma \eqref{lemma odd}, and from Theorem \eqref{Theorem of the 4 levels}, we get the following relation 	
		\begin{equation*}
			\begin{aligned}	
				\Psi_k(n) =  \frac{\big(\lfloor{\frac{n+1}{2}}\rfloor -k\big)}{(n+1)}    \Psi_k(n+1)   
				=  (-1)^{\lfloor{\frac{k}{2}}\rfloor} (n+1-2k)  \:
				\frac {\prod\limits_{\lambda = 1}^{\lfloor{\frac{k}{2}}\rfloor} (n+1)^2 - (4 \lambda -2)^2}	{4^k \: k!} \\							    
			\end{aligned} \\
		\end{equation*}
	\end{proof}

	\underline{Consider $ n \equiv 1  \pmod{8} \: \text{and} \: k \text \: {even}$} 
	\begin{proof}
		In this case, $ n +1 \equiv 2  \pmod{8}$ and from Lemma \eqref{lemma two}, we get $ \Psi_{k}(n+1)=0$. From Lemma \eqref{lemma odd}, and from Theorem \eqref{Theorem of the 4 levels}, we get the following relation 	
		\begin{equation*}
			\begin{aligned}	
				\Psi_k(n) =  2 \frac{(k+1)}{(n+1)} \Psi_{k+1}(n+1)   
				=(-1)^{\lfloor{\frac{k}{2}}\rfloor}  \:
				\frac {\prod\limits_{\lambda = 1}^{\lfloor{\frac{k}{2}}\rfloor} (n+1)^2 - (4 \lambda -2)^2}	{4^k \: k!} \\							    
			\end{aligned} \\
		\end{equation*}
	\end{proof}
	
	\underline{Consider $ n \equiv 3 \pmod{8} \: \text{and} \: k \text \: {odd}$} 
	\begin{proof}
		In this case, from Lemma \eqref{lemma two}, we get $ \Psi_{k}(n+1)=0$. Hence, from Lemma \eqref{lemma odd}, and from Theorem \eqref{Theorem of the 4 levels}, we get the following relation 	
		
		\begin{equation*}
			\begin{aligned}	
				\Psi_k(n) =  2 \frac{(k+1)}{(n+1)} \Psi_{k+1}(n+1)   
				= (-1)^{\lfloor{\frac{k}{2}}\rfloor} (n+1) \:
				\frac {\prod\limits_{\lambda = 1}^{\lfloor{\frac{k}{2}}\rfloor } (n+1)^2 - (4 \lambda)^2}	{4^k \: k!} 
				\\							    
			\end{aligned} \\
		\end{equation*}
	\end{proof}
	
	\underline{Consider $ n \equiv 3  \pmod{8} \: \text{and} \: k \text \: {even}$} 
	\begin{proof}
		In this case, $ \Psi_{k+1}(n+1)=0$. From Lemma \eqref{lemma odd}, and from Theorem \eqref{Theorem of the 4 levels}, we get the following relation
		
		\begin{equation*}
			\begin{aligned}	
				\Psi_k(n) =  \frac{\big(\lfloor{\frac{n+1}{2}}\rfloor -k\big)}{(n+1)}    \Psi_k(n+1)   
				= (-1)^{\lfloor{\frac{k}{2}}\rfloor + 1} (n+1) (n+1-2k) \:
				\frac {\prod\limits_{\lambda = 1}^{\lfloor{\frac{k}{2}}\rfloor - 1} (n+1)^2 - (4 \lambda)^2}	{4^k \: k!}   \\							    
			\end{aligned} \\
		\end{equation*}
		
	\end{proof}

	\underline{Consider $ n \equiv 5  \pmod{8} \: \text{and} \: k \text \: {odd}$} 
	\begin{proof}
		In this case, from Lemma \eqref{lemma two}, we get $ \Psi_{k+1}(n+1)=0$. Hence, from Lemma \eqref{lemma odd}, and from Theorem \eqref{Theorem of the 4 levels}, we get the following relation 	
		\begin{equation*}
			\begin{aligned}	
				\Psi_k(n) =  \frac{\big(\lfloor{\frac{n+1}{2}}\rfloor -k\big)}{(n+1)}    \Psi_k(n+1)   
				=  (-1)^{\lfloor{\frac{k}{2}}\rfloor +1}  \: (n+1-2k) \:
				\frac {\prod\limits_{\lambda = 1}^{\lfloor{\frac{k}{2}}\rfloor }(n+1)^2 - (4 \lambda -2)^2}	{4^k \: k!}  \\							    
			\end{aligned} \\
		\end{equation*}
	\end{proof}

	\underline{ Consider $ n \equiv 5  \pmod{8} \: \text{and} \: k \text \: {even}$} 
	\begin{proof}
		In this case, from Lemma \eqref{lemma two}, we get $ \Psi_{k}(n+1)=0$. From Lemma \eqref{lemma odd}, and from Theorem \eqref{Theorem of the 4 levels}, we get the following relation 	
		\begin{equation*}
			\begin{aligned}	
				\Psi_k(n) =  2 \frac{(k+1)}{(n+1)} \Psi_{k+1}(n+1)   
				= (-1)^{\lfloor{\frac{k}{2}}\rfloor +1}  \: 
				\frac {\prod\limits_{\lambda = 1}^{\lfloor{\frac{k}{2}}\rfloor }(n+1)^2 - (4 \lambda -2)^2}	{4^k \: k!}  \\							    
			\end{aligned} \\
		\end{equation*}
	\end{proof}
	
	\underline{Consider $ n \equiv 7 \pmod{8} \: \text{and} \: k \text \: {odd}$} 
	\begin{proof}
		In this case, from Lemma \eqref{lemma two}, we get $ \Psi_{k}(n+1)=0$. Hence, from Lemma \eqref{lemma odd}, and from Theorem \eqref{Theorem of the 4 levels}, we get the following relation 	
		
		\begin{equation*}
			\begin{aligned}	
				\Psi_k(n) &=  2 \frac{(k+1)}{(n+1)} \Psi_{k+1}(n+1) \\  
				&= (-1)^{\lfloor{\frac{k}{2}}\rfloor + 1} (n+1)  \:
				\frac {\prod\limits_{\lambda = 1}^{\lfloor{\frac{k}{2}}\rfloor} (n+1)^2 - (4 \lambda)^2}	{4^k \: k!} 
				\\							    
			\end{aligned} \\
		\end{equation*}
	\end{proof}
	
	\underline{Consider $ n \equiv 7  \pmod{8} \: \text{and} \: k \text \: {even}$} 
	\begin{proof}
		In this case, $ \Psi_{k+1}(n+1)=0$. From Lemma \eqref{lemma odd}, and from Theorem \eqref{Theorem of the 4 levels}, we get the following relation
		
		\begin{equation*}
			\begin{aligned}	
				\Psi_k(n) &=  \frac{\big(\lfloor{\frac{n+1}{2}}\rfloor -k\big)}{(n+1)}    \Psi_k(n+1)  \\ 
				&= (-1)^{\lfloor{\frac{k}{2}}\rfloor } (n+1) (n+1-2k) \:
				\frac {\prod\limits_{\lambda = 1}^{\lfloor{\frac{k}{2}}\rfloor -1} (n+1)^2 - (4 \lambda)^2}	{4^k \: k!}  \\							    
			\end{aligned} \\
		\end{equation*}
		
	\end{proof}

This completes the proof of Theorem \eqref{Theorem of the 4 levels}.
	\end{proof}
	
	\section{General characteristics for $\Psi-$ sequence
	}
	
	\subsection{Examples for $\Psi-$Sequence. }
	From Theorem \eqref{Theorem of the 4 levels}, we list some examples that show the splendor of the natural factorization of $\Psi_k(n)$ for $k=0,1,2,3,4,5,6,7$: 
	\begin{equation}
		\label{initial values - 1} 
		\begin{aligned}	
			\Psi_0(n) = 		
			\begin{cases}
				+2  &  n \equiv 0  \pmod{8}  \\   
				+	 1 &  n \equiv 1  \pmod{8}  \\
				\: \: 0  &  n \equiv 2  \pmod{8}  \\   
				-1&  n \equiv 3  \pmod{8}  \\
				-2  &  n \equiv 4  \pmod{8}  \\   
				-1 &  n \equiv 5  \pmod{8}  \\
				\: \: 	0 &  n \equiv 6  \pmod{8}  \\   
				+	1  &  n \equiv 7 \pmod{8}  
			\end{cases} 
			\qquad , \qquad 
			\Psi_1(n) = 		
			\begin{cases}
				\quad 	0  &  n \equiv 0  \pmod{8}  \\   
				+	\frac{(n-1)}{4} &  n \equiv 1  \pmod{8}  \\[1.5mm]
				+ 2 \: \frac{n}{4} &  n \equiv 2  \pmod{8}  \\[1.5mm]  
				+\frac{(n+1)}{4} &  n \equiv 3  \pmod{8}  \\
				\quad 0 &  n \equiv 4  \pmod{8}  \\   
				- \frac{(n-1)}{4} &  n \equiv 5  \pmod{8}  \\[1.5mm]
				-2 \:  \frac{n}{4}  &  n \equiv 6  \pmod{8}  \\[1.5mm]  
				- \frac{(n+1)}{4}  &  n \equiv 7 \pmod{8}  
			\end{cases}, 
		\end{aligned} \\
	\end{equation}
	
	\begin{equation}
		\label{initial values - 2 } 
		\begin{aligned}	
			\Psi_2(n)  = 		
			\begin{cases}
				- 2 \: \frac{n^2}{4^2  \: 2!}  &  n \equiv 0  \pmod{8}  \\[1.5mm]   
				-	\frac{(n-1)(n+3)}{4^2  \: 2!} &  n \equiv 1  \pmod{8}  \\[1.5mm]
				\quad  0  &  n \equiv 2  \pmod{8}  \\   
				+\frac{(n-3)(n+1)}{4^2  \: 2!} &  n \equiv 3  \pmod{8}  \\[1.5mm]
				+ 2  \: \frac{n^2}{4^2  \: 2!}  &  n \equiv 4  \pmod{8}  \\[1.5mm]   
				+ \frac{(n-1)(n+3)}{4^2 \: 2!} &  n \equiv 5  \pmod{8}  \\[1.5mm]
				\quad 0  &  n \equiv 6  \pmod{8}  \\   
				- \frac{(n-3)(n+1)}{4^2  \:  2!}  &  n \equiv 7 \pmod{8}  
			\end{cases} 
			, \:
			\Psi_3(n) = 		
			\begin{cases}
				\quad 	0  &  n \equiv 0  \pmod{8}  \\   
				-	\frac{(n-5)(n-1)(n+3)}{4^3 \: 3!} &  n \equiv 1  \pmod{8}  \\[1.5mm]
				- 2 \: \frac{(n-2)\: n \: (n+2)}{4^3  \:3!} &  n \equiv 2  \pmod{8} \\[1.5mm]  
				-\frac{(n-3)(n+1)(n+5)}{4^3   \; 3!} &  n \equiv 3  \pmod{8}  \\[1.5mm]
				\quad 0 &  n \equiv 4  \pmod{8}  \\   
				+ 	\frac{(n-5)(n-1)(n+3)}{4^3 \: 3!} &  n \equiv 5  \pmod{8}  \\[1.5mm]
				+2 \: \frac{(n-2)\: n \: (n+2)}{4^3  \:3!} &  n \equiv 6  \pmod{8}  \\[1.5mm]  
				+ \frac{(n-3)(n+1)(n+5)}{4^3   \; 3!} &  n \equiv 7 \pmod{8}  
			\end{cases}, 
		\end{aligned} \\
	\end{equation}
	
	\begin{equation}
		\label{initial values - 3 } 
					\large
		\Psi_4(n) = 		
		\begin{cases}
			+ 2 \:  \frac{ (n-4) \: n^2 \:  (n+4)}{4^4  \; 4!}  &  n \equiv 0  \pmod{8} \\[1.5mm]   
			+  \frac{(n-5)(n-1)(n+3)(n+7)}{4^4  \; 4!} &  n \equiv 1  \pmod{8}  \\[1.5mm]
			\quad 	0 &  n \equiv 2  \pmod{8}  \\   
			-  \frac{(n-7)(n-3)(n+1)(n+5)}{4^4  \; 4!} &  n \equiv 3  \pmod{8}  \\[1.5mm]
			-  \: 2 \:  \frac{ (n-4) \: n^2 \:  (n+4)}{4^4  \; 4!}  &  n \equiv 4  \pmod{8}  \\[1.5mm]
			- \frac{(n-5)(n-1)(n+3)(n+7)}{4^4  \; 4!} &  n \equiv 5  \pmod{8}  \\
			\quad 	0 &  n \equiv 6  \pmod{8}  \\[1.5mm]  
			+  \frac{(n-7)(n-3)(n+1)(n+5)}{4^4  \; 4!} &  n \equiv 7 \pmod{8}  
		\end{cases}, 
	\end{equation}

	and

	\begin{equation}
		\label{initial values - 4 } 
					\large
		\Psi_5(n) = 		
		\begin{cases}
			\quad 0  &  n \equiv 0  \pmod{8}  \\   
			+ \frac{(n-9)(n-5)(n-1)(n+3)(n+7)}{4^5  \; 5!} &  n \equiv 1  \pmod{8}  \\[1.5mm]
			+ \: 2 \: \frac{(n-6)(n-2) \: n \: (n+2)(n+6)}{4^5  \; 5!} &  n \equiv 2  \pmod{8} \\[1.5mm]  
			+  \frac{(n-7)(n-3)(n+1)(n+5)(n+9)}{4^5  \; 5!} &  n \equiv 3  \pmod{8}  \\
			\quad 0  &  n \equiv 4  \pmod{8}  \\   
			-\frac{(n-9)(n-5)(n-1)(n+3)(n+7)}{4^5  \; 5!} &  n \equiv 5  \pmod{8}  \\[1.5mm]
			-  \: 2 \: \frac{(n-6)(n-2) \: n \: (n+2)(n+6)}{4^5  \; 5!} &  n \equiv 6  \pmod{8}  \\[1.5mm]  
			-  \frac{(n-7)(n-3)(n+1)(n+5)(n+9)}{4^5  \; 5!} &  n \equiv 7 \pmod{8}  
		\end{cases}, 
	\end{equation}
	and 
	\begin{equation}
					\large
		\label{initial values - 5 } 
		\Psi_6(n) = 		
		\begin{cases}
			- \: 2 \: \frac{(n-8)(n-4) \: n^2 \: (n+4)(n+8)}{4^6  \; 6!}   &  n \equiv 0  \pmod{8}  \\[1.5mm]  
			-  \frac{(n-9)(n-5)(n-1)(n+3)(n+7)(n+11)}{4^6  \; 6!}  &  n \equiv 1  \pmod{8}  \\
			\quad 0   &  n \equiv 2  \pmod{8}  \\   
			+  \frac{(n-11)(n-7)(n-3)(n+1)(n+5)(n+9)}{4^6  \; 6!}  &  n \equiv 3  \pmod{8}  \\[1.5mm]
			+  2 \: \frac{(n-8)(n-4) \: n^2 \: (n+4)(n+8)}{4^6  \; 6!}   &  n \equiv 4  \pmod{8}  \\[1.5mm]  
			+  \frac{(n-9)(n-5)(n-1)(n+3)(n+7)(n+11)}{4^6  \; 6!}   &  n \equiv 5  \pmod{8}  \\
			\quad 0 &  n \equiv 6  \pmod{8} \\[1.5mm]  
			- \frac{(n-11)(n-7)(n-3)(n+1)(n+5)(n+9)}{4^6  \; 6!}  &  n \equiv 7 \pmod{8},  
		\end{cases},
	\end{equation}
	and the following example should give us a better vision about that sequence: 
	\begin{equation}
		\label{initial values - 7 } 
		\begin{aligned}	
			\large
			\Psi_7(n) = 		
			\begin{cases}
				\quad 	0  &  n \equiv 0  \pmod{8}  \\   
				
				-	\frac{(n-13)(n-9)(n-5)(n-1)(n+3)(n+7)(n+11)}{4^7 \: 7!} &  n \equiv 1  \pmod{8}  \\[1.5mm]
				
				- 2 \: \frac{(n-10)(n-6)(n-2)\: n \: (n+2)(n+6)(n+10)}{4^7  \:7!} &  n \equiv 2  \pmod{8} \\[1.5mm]  
				
				-\frac{(n-11)(n-7)(n-3)(n+1)(n+5)(n+9)(n+13)}{4^7   \; 7!} &  n \equiv 3  \pmod{8}  \\[1.5mm]
				
				\quad 0 &  n \equiv 4  \pmod{8}  \\   
				
				+ 	\frac{(n-13)(n-9)(n-5)(n-1)(n+3)(n+7)(n+11)}{4^7 \: 7!} &  n \equiv 5  \pmod{8}  \\[1.5mm]
				
				+2 \: \frac{(n-10)(n-6)(n-2)\: n \: (n+2)(n+6)(n+10)}{4^7  \:7!} &  n \equiv 6  \pmod{8}  \\[1.5mm]  
				
				+ \frac{(n-11)(n-7)(n-3)(n+1)(n+5)(n+9)(n+13)}{4^7   \; 7!} &  n \equiv 7 \pmod{8}  
			\end{cases}. 
		\end{aligned} \\
	\end{equation}
	As we see, sometimes the sequence has a center factor that all the other factors in the numerators get around it. For $n \equiv 0,2,4,6 \pmod{8}$ is symmetric in this sense. However,  it gives a different story for $n \equiv 1,3,5,7 \pmod{8}$,  and a natural phenomena for these numbers arise up here and need a closer attention. 
	\subsection{Right tendency for $\Psi_k(n)$}
	For $n \equiv \pm 1 \pmod{8}$, and from the data above, and from the formulas of $\Psi_k(n)$, we can observe that the factors of the numerators for $k=1,2,3,4,5,6, \cdots$ always fill the right part first then go around the center factor to fill the left part and so on as explained in the following pattern and table
	\begin{equation}{\text{Right Tendency For } \Psi_k(n) \quad  }
		\label{Right Tendency}
		\begin{array}{c  c  c c  c c c c c } 
			k &   & &   &   \text{The Center} &  &  &    \\
			
			1  & &  & &(n-1 )&  &  &   \\
			
			2 &  &  &   &  (n-1 ) & (n+3 )  &    &   \\
			
			3  & &  &(n-5 ) & (n-1 ) & (n+3 ) &   & \\
			
			4   &  &  &(n-5 ) &   (n-1 ) & (n+3) & (n+7) &   \\  
			
			5      &  &   (n-9) &(n-5 ) &   (n-1 ) & (n+3) & (n+7) &  \\  
			
			6      &  &   (n-9) &(n-5 ) &   (n-1 ) & (n+3) & (n+7) & (n+11)   \\  
		\end{array}
	\end{equation}

	\subsection{Left tendency for $\Psi_k(n)$}
	However, for $n \equiv \pm 3 \pmod{8}$, and from the data above, and the formulas of $\Psi_k(n)$, we can also observe that the factors of the numerators for $k=1,2,3,4,5,6, \cdots $ always fill the left part first then go around the center factor to fill the right part and so on as explained in the following pattern and table
	\begin{equation}{\text{Left Tendency For } \Psi_k(n) \quad  }
		\label{Left Tendency}
		\begin{array}{c  c  c c  c c c c c } 
			k &   & &  &  \text{The Center} &  &   &      \\
			
			1  & &  & &(n+1 )&  &  &     \\
			
			2 &  &  &  (n-3 )  &  (n+1 ) &  &    &   \\
			
			3  & &  &(n-3 ) & (n+1 ) & (n+5 ) &   &  \\
			
			4   &  & (n-7) &(n- 3) &   (n+1 ) & (n+5) &  &   \\  
			
			5    &  & (n-7) &(n- 3) &   (n+1 ) & (n+5) & (n+9) &   \\   
			
			6   & (n-11)  &  (n-7)&   (n-3) &(n+1 ) &   (n+5 ) & (n+9) &   \\  
		\end{array}
	\end{equation}

	\subsection{The signs phenomena of $\Psi-$integers}
 For $ k \equiv 0,\: 1  ,\:2  ,\:3  ,\: 4   ,\: 5  ,\: 6  ,\: 7    \pmod{8}$, we get the following data
	\begin{equation}
		\label{Signes1}
		\begin{array}{c  c  c c  c c } \\
			k \pmod{8} &  (-1)^{\lfloor{\frac{k}{2}}\rfloor}   & (-1)^{\lfloor{\frac{k}{2}}\rfloor +1} & (-1)^{\lfloor{\frac{k}{2}}\rfloor + \delta(k)}  &   (-1)^{\lfloor{\frac{k}{2}}\rfloor + \delta(k-1)} \\
			0 &  +1 &  -1 & +1  &  -1    \\
			
			1  &  +1  &  -1  &  -1   &  +1        \\
			
			2  &  -1  &  +1  &   -1     &  +1     \\
			
			3  &  -1  &   +1     &  +1    &  -1     \\
			
			4   &  +1  &   -1     &  +1    &   -1    \\      
			
			5    &   +1    & -1    &    -1   &  +1     \\
			
			6       &    -1    &  +1   &   -1   &   +1    \\
			
			7    &  -1  &   +1      &  +1    &    -1   \\
		\end{array}
	\end{equation}
	
		Therefore, the signs of $\Psi-$sequence get periodically every 8 steps. Table \eqref{Signes2} shows that three plus are followed by zero then followed by three minus then followed by zero and so on. For $ n \equiv 0,\: 1  ,\:2  ,\:3  ,\: 4   ,\: 5  ,\: 6  ,\: 7    \pmod{8}$ and for $ k \equiv 0,\: 1  ,\:2  ,\:3  ,\: 4   ,\: 5  ,\: 6  ,\: 7    \pmod{8}$  the following data is useful \\
	\begin{equation}{\text{Signs of } \Psi_k(n)  }
		\label{Signes2}
		\begin{array}{c  c  c c  c c c c c c} 
			n \pmod{8} &  \Psi_0(n) & \Psi_1(n) & \Psi_2(n)  &   \Psi_3(n) & \Psi_4(n) & \Psi_5(n)  &   \Psi_6(n) & \Psi_7(n)   \\
			0 &  + & 0 & - & 0 & + & 0 & -  &  0  \\
			
			1  & + & + & - & - & + & + & - &  - \\
			
			2 & 0 & + & 0 &  - &  0  & +   & 0 & -   \\
			
			3  & - & + & + & - &  -  &  + & + & - \\
			
			4   & - & 0 & + &   0 & - & 0 & + &  0  \\  
			
			5    & - & - & + & +  & - & - & + &  +  \\
			
			6     & 0 & - &  0 & + & 0 & - & 0 &  +  \\
			
			7    & + & - & - &  + & + & - & - &  + \\
		\end{array}
	\end{equation}

	\subsection{General property for the $\Psi-$integers}
	Whether $ n \equiv 0,\: 1  ,\:2  ,\:3  ,\: 4   ,\: 5  ,\: 6  ,\: 7    \pmod{8},  $  it is rather surprising that the quantity 
	\[    \Psi_{k}(n) \div       \Psi_{k-2}(n)        \]
	always gives 
	\[  - \: \:\frac{\: \: n^2 \: - \:  (2k-4)^2 \:}{16 \: k \: (k-1)} \quad   \text{or} \quad 
	- \: \frac{\: \: (n     \pm 1) ^2 \: - \:  (2k-2)^2 \:}{16 \: k \: (k-1)} \quad   \text{or} \quad  0 \div 0.    \]
	Therefore, computing these ratios, at the eight levels $ n \equiv 0,\: 1  ,\:2  ,\:3  ,\: 4   ,\: 5  ,\: 6  ,\: 7    \pmod{8},  $ immediately proves the following desirable theorem that gives a fraction with a difference of two squares divided by a multiplication of the factors $16$,  $k$, and $k-1$.
	
	\begin{theorem}{(Generating The $\Psi-$integers From The Previous Term)}
		\label{Generating psi}  \\
		If $ \Psi_{k}(n)$ not identically zero, then
		\[   \frac{\Psi_{k}(n)}{\Psi_{k-2}(n)} = - \: \frac{\: \:		\big[n +(-1)^{\lfloor{\frac{n}{2}}\rfloor + k  } \delta(n) \big]^2 \: - \: \big[2k-2 - 2 \delta(n-1) \big]   ^2 \:}{16 \:\: k \: (k-1)},
		\]
		and we can choose either of the following initial values to generate $\Psi_k(n)$ from the starting term $\Psi_0(n)$ or the last term $\Psi_{\lfloor{\frac{n}{2}}\rfloor }(n)$ :
		\begin{equation*}
			\label{starting values } 
			\begin{aligned}	
				\Psi_0(n) = 		
				\begin{cases}
					+2  &  n \equiv \:\: 0  \pmod{8}  \\   
					+	 1 &  n \equiv \pm 1  \pmod{8}  \\
					\: \: 0  &  n \equiv \pm 2  \pmod{8}  \\   
					-1&  n \equiv \pm 3  \pmod{8}  \\
					-2  &  n \equiv \pm 4  \pmod{8}  \\   
				\end{cases} 
				\qquad , \qquad 
				\Psi_{\lfloor{\frac{n}{2}}\rfloor }(n) = 1.
			\end{aligned} \\
		\end{equation*}
	\end{theorem}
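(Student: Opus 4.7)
The plan is to derive the ratio formula directly from the closed-form expressions of the Eight Levels Theorem, doing a case-by-case verification over the eight residue classes of $n \bmod 8$. The key structural observation is that in every sub-case where $\Psi_k(n)\neq 0$, the explicit formula has the shape
\[
\Psi_k(n) \;=\; \pm\,\frac{(\text{optional linear prefactor})\cdot\prod_{\lambda}\bigl[N^2-(\text{shift}(\lambda))^2\bigr]}{4^k\,k!},
\]
where $N=n$ when $n$ is even and $N=n+1$ when $n$ is odd, and the sign is governed by $(-1)^{\lfloor k/2\rfloor}$ up to a $k$-independent twist. Passing from $k-2$ to $k$ always (i) multiplies the denominator $4^k\,k!$ by $16\,k(k-1)$, (ii) flips the overall sign because $\lfloor k/2\rfloor$ increases by exactly $1$ (this produces the global minus sign of the statement), and (iii) introduces exactly one new factor in the telescoping product, whose explicit value is read off from the upper limit of the product over $\lambda$.

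Before the case analysis I dispose of the two initial-value claims. The value of $\Psi_0(n)$ is copied verbatim from the starting-values formula in the Eight Levels Theorem. For the endpoint $\Psi_{\lfloor n/2\rfloor}(n)=1$, I substitute $y=0$ in the defining identity for $\Psi_k(n)$: the left-hand side reduces to $x^{n-\delta(n)}=x^{2\lfloor n/2\rfloor}$, while on the right-hand side every summand with $k<\lfloor n/2\rfloor$ is killed by a positive power of $xy$, leaving only $\Psi_{\lfloor n/2\rfloor}(n)(x^2)^{\lfloor n/2\rfloor}$; comparing coefficients of $x^{2\lfloor n/2\rfloor}$ yields $\Psi_{\lfloor n/2\rfloor}(n)=1$ without invoking the combinatorial identities of Theorem \ref{E4}. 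The main body of the proof is then a twelve-case verification: the four even residues of $n$ (with only one parity of $k$ relevant by Lemma \ref{lemma two}) and the four odd residues (both parities of $k$). For even $n$ the telescoping leaves the single new factor $n^2-(2k-4)^2$, matching the statement since $\delta(n)=0$ and $2\delta(n-1)=2$. For odd $n$, either the product directly gives $(n+1)^2-(2k-2)^2$, or a linear prefactor $(n+1-2k)^{\delta(k)}$ or $(n+1-2k)^{\delta(k-1)}$ contributes a ratio $(n+1-2k)/(n+5-2k)$ which combines with the new product factor $(n+1)^2-(2k-4)^2=(n+5-2k)(n+2k-3)$, via one elementary difference-of-squares step, to yield $(n-1)^2-(2k-2)^2$. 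In each sub-case, the choice of sign in $n\pm 1$ is directly matched against $(-1)^{\lfloor n/2\rfloor+k}$ using the fact that $\lfloor n/2\rfloor \bmod 2$ equals $0,1,0,1$ for $n\equiv 1,3,5,7\pmod 8$ respectively.

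The main obstacle is pure bookkeeping: no new idea is required beyond the Eight Levels Theorem and a single difference-of-squares telescoping, but the eight residue classes must be traversed with meticulous tracking of sign parities, floor functions, and the vanishing patterns of Lemma \ref{lemma two} in order to confirm that the unified expression $[n+(-1)^{\lfloor n/2\rfloor+k}\delta(n)]^2-[2k-2-2\delta(n-1)]^2$ in the statement reproduces the correct factor in every one of the twelve sub-cases.
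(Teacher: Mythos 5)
Your proposal is correct and follows essentially the same route as the paper, which likewise obtains the ratio by computing $\Psi_k(n)/\Psi_{k-2}(n)$ directly from the explicit formulas of the Eight Levels Theorem across the eight residue classes; your telescoping/difference-of-squares bookkeeping (e.g.\ $(n+1-2k)(n+2k-3)=(n-1)^2-(2k-2)^2$) checks out in each sub-case. Your substitution $y=0$ to get $\Psi_{\lfloor n/2\rfloor}(n)=1$ is a clean justification of an endpoint value the paper asserts without proof, but it does not change the overall method.
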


	\section{The emergence of $ \phi-$sequence}
	Another natural sequence that emerges naturally from $\Psi_k(n) $ is the integer sequence $\phi_k(n)$ which is defined as follows

	\begin{definition} 
		\[        \phi_k(n) := 4^k \: \Psi_k(n)    \]
	\end{definition}

	\subsection{Recurrence relation of order $4$ to generate $\phi-$sequence}
	
	From \eqref{lemma one}, we get the following recurrence relation 
	
	\begin{lemma}
		\label{lemma 1B}  
		For each natural number $n$, $ 0 \le k \le  \lfloor{\frac{n}{2}}\rfloor$,  the integers  $\phi_k(n)$  satisfy the following property
		\begin{equation}
			\label{Equation of result-4I} 
			\phi_k(n) = 4 \: \phi_{k-1}(n-2)  -    \phi_k(n-4)       	
		\end{equation}
	\end{lemma}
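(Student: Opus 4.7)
The plan is to derive this as an immediate corollary of Lemma \eqref{lemma one}, which already established the recurrence
\[
\Psi_k(n) = \Psi_{k-1}(n-2) - \Psi_k(n-4)
\]
for the coefficients appearing in the Eight Levels expansion. Since the sequence $\phi_k(n)$ is defined by the rescaling $\phi_k(n) := 4^k \, \Psi_k(n)$, the desired identity should fall out by simply multiplying through by a suitable power of $4$ and rebalancing the exponents on each term.

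First I would multiply both sides of the $\Psi$-recurrence by $4^k$. The left-hand side becomes $4^k \Psi_k(n) = \phi_k(n)$ by definition. On the right, the second term $4^k \Psi_k(n-4)$ is already $\phi_k(n-4)$ in the $\phi$-notation, because the rescaling factor depends only on the lower index $k$, not on $n$; this is the crucial bookkeeping point. The first term $4^k \Psi_{k-1}(n-2)$ needs one extra factor of $4$ peeled off, which I would do by writing $4^k = 4 \cdot 4^{k-1}$, so that $4^{k-1} \Psi_{k-1}(n-2) = \phi_{k-1}(n-2)$ and the leading coefficient $4$ emerges naturally.

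Combining these three identifications yields exactly
\[
\phi_k(n) = 4 \, \phi_{k-1}(n-2) - \phi_k(n-4),
\]
which is the claim. There is essentially no obstacle here: the entire argument is a routine rescaling, and the only thing to be careful about is tracking that the exponent of $4$ in the definition $\phi_k = 4^k \Psi_k$ matches the lower index $k$ on both sides of the recurrence consistently. No case analysis modulo $8$ and no induction is needed, since Lemma \eqref{lemma one} already handles the uniformity across parities of $n$.
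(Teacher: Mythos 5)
Your proposal is correct and is exactly the derivation the paper intends: it presents the lemma as an immediate consequence of Lemma \eqref{lemma one} via the rescaling $\phi_k(n)=4^k\,\Psi_k(n)$, with the factor $4$ arising from writing $4^k = 4\cdot 4^{k-1}$ on the $\Psi_{k-1}(n-2)$ term. Your bookkeeping observation that the power of $4$ depends only on the lower index $k$ and not on $n$ is precisely the point that makes the substitution go through.
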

	From \eqref{initial values - 1}, we easily get the following initial values
	\begin{lemma}
		\label{2B}
		For each natural number $n$
		
		\begin{equation}
			\label{initial values - 1B} 
			\begin{aligned}	
				\phi_0(n) = 		
				\begin{cases}
					+2  &  n \equiv 0  \pmod{8}  \\   
					+	 1 &  n \equiv \pm 1  \pmod{8}  \\
					\: \: 0  &  n \equiv \pm 2  \pmod{8}  \\   
					-1&  n \equiv \pm 3  \pmod{8}  \\
					-2  &  n \equiv \pm 4  \pmod{8}  \\   
				\end{cases} 
				\qquad , \qquad 
				\phi_1(n) = 		
				\begin{cases}
					\quad 	0  &  n \equiv 0  \pmod{8}  \\   
					+	(n-1) &  n \equiv 1  \pmod{8}  \\
					+ 2 \: n &  n \equiv 2  \pmod{8}  \\   
					+ (n+1) &  n \equiv 3  \pmod{8}  \\
					\quad 0 &  n \equiv 4  \pmod{8}  \\   
					- (n-1) &  n \equiv 5  \pmod{8}  \\
					-2 \:  n  &  n \equiv 6  \pmod{8}  \\   
					- (n+1)  &  n \equiv 7 \pmod{8}  
				\end{cases}, 
			\end{aligned} \\
		\end{equation}
	\end{lemma}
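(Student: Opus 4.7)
The plan is to derive both formulas directly from the definition $\phi_k(n) := 4^k \Psi_k(n)$, substituting $k=0$ and $k=1$ and reading off the known values of $\Psi_0(n)$ and $\Psi_1(n)$ that were already established in the Eight Levels Theorem (Theorem \ref{Theorem of the 4 levels}) and listed explicitly in \eqref{initial values - 1}. Since $4^0=1$, the identity $\phi_0(n) = \Psi_0(n)$ is immediate, and the piecewise formula for $\phi_0(n)$ in the lemma is literally the formula \eqref{starting} for $\Psi_0(n)$ produced by setting $(x,y) = (1,\sqrt{-1})$ in the expansion \eqref{Equation of result-4}.

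For the $\phi_1$ formula, the strategy is case-by-case. First I would handle the even residues: when $n \equiv 0$ or $4 \pmod 8$, Lemma \ref{lemma two} already gives $\Psi_1(n) = 0$, hence $\phi_1(n) = 0$; when $n \equiv 2$ or $6 \pmod 8$, I specialize the corresponding formula from Theorem \ref{Theorem of the 4 levels} for $k=1$ (where the product $\prod_{\lambda=1}^{0}$ is empty and equals $1$ by the convention in the Notations section), obtaining $\Psi_1(n) = \pm \tfrac{n}{2}$, and multiplying by $4$ gives $\pm 2n$.

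For the odd residues $n \equiv \pm 1, \pm 3 \pmod 8$, I would plug $k=1$ into the relevant Eight Levels formula. For $n \equiv 1,5 \pmod 8$, $\lfloor k/2 \rfloor = 0$, $\delta(k)=1$, and the empty product equals $1$, giving $\Psi_1(n) = \pm (n+1-2)/4 = \pm(n-1)/4$. For $n \equiv 3,7 \pmod 8$, $\delta(k-1) = 0$ and the empty product again equals $1$, giving $\Psi_1(n) = \pm (n+1)/4$. Scaling by $4^1$ yields the tabulated values of $\phi_1(n)$ claimed in the lemma.

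I do not anticipate any serious obstacle: the entire argument is bookkeeping, with all the heavy lifting already performed in Theorem \ref{Theorem of the 4 levels} and Lemma \ref{lemma two}. The only point of care is ensuring that the empty-product convention from the Notations section is applied consistently when $\lfloor k/2 \rfloor - 1 = -1$ (which occurs for every $k=1$ case arising here), so that the formulas from the theorem reduce cleanly to the closed forms stated for $\phi_1(n)$.
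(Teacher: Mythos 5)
Your proposal is correct and follows essentially the same route as the paper: the paper justifies this lemma by reading off $\Psi_0(n)$ and $\Psi_1(n)$ from the tabulated specializations of the Eight Levels Theorem in \eqref{initial values - 1} and scaling by $4^k$, which is exactly the bookkeeping you carry out (a bit more explicitly, with the empty-product convention handled case by case). No gaps.
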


	\subsection{Explicit formulas For $\phi-$sequence}

	Now, from Theorem \eqref{Theorem of the 4 levels}, we get the following explicit formulas for the integer sequence $\phi_k(n)$.
	
	\begin{lemma}
		\label{values of phi}  
		For any non negative integers $n,k$, the  sequences $\phi_k(n)$ satisfy the following statements \\
		
			\underline{$n \equiv 0 \pmod{8}$} 
		\begin{equation*}
	 		\phi_k(n) = \begin{cases}
					0  & \mbox{for $k$ odd } \\   
					2 \: (-1)^{\lfloor{\frac{k}{2}}\rfloor}
					\: \frac {\prod\limits_{\lambda = 0}^{\lfloor{\frac{k}{2}}\rfloor -1} n^2 - (4 \lambda)^2}	{ k!} & \mbox{for $k$ even } \end{cases}   \\
				  	\end{equation*}
			  			  	
	\underline{$n \equiv 1 \pmod{8}$} 
				\begin{equation*}				
				\phi_k(n) = (-1)^{\lfloor{\frac{k}{2}}\rfloor} (n+1-2k)^{\delta(k)}  \:
				\frac {\prod\limits_{\lambda = 1}^{\lfloor{\frac{k}{2}}\rfloor} (n+1)^2 - (4 \lambda -2)^2}	{k!}  \\
				  	\end{equation*}
			  	
	\underline{$n \equiv 2 \pmod{8}$} 	  	
				\begin{equation*}
				\phi_k(n) =
				\begin{cases}
					0  & \mbox{for $k$ even } \\   
					2 \: (-1)^{{\lfloor{\frac{k}{2}}\rfloor}} \:
					n \:	\frac {\prod\limits_{\lambda = 1}^{\lfloor{\frac{k}{2}}\rfloor} n^2 - (4 \lambda -2)^2}	{ k!} & \mbox{for $k$ odd } \end{cases}  
				\end{equation*}
			
	\underline{$n \equiv 3 \pmod{8}$} 	
				\begin{equation*}
				\phi_k(n) = (-1)^{\lfloor{\frac{k}{2}}\rfloor + \delta(k-1)} (n+1) (n+1-2k)^{\delta(k-1)}  \:
				\frac {\prod\limits_{\lambda = 1}^{\lfloor{\frac{k}{2}}\rfloor - \delta(k-1)} (n+1)^2 - (4 \lambda)^2}	{ k!}  \\ 
					\end{equation*}
			
	\underline{$n \equiv 4 \pmod{8}$} 	
				\begin{equation*}
				\phi_k(n) =
				\begin{cases}
					0  & \mbox{for $k$ odd } \\   
					2 \: (-1)^{\lfloor{\frac{k}{2}}\rfloor +1}  \:
					\frac {\prod\limits_{\lambda = 0}^{\lfloor{\frac{k}{2}}\rfloor -1} n^2 - (4 \lambda)^2}	{ k!} & \mbox{for $k$ even } \end{cases}   \\ 
					\end{equation*}
			
	\underline{$n \equiv 5 \pmod{8}$} 	
				\begin{equation*}	 
				\phi_k(n) = (-1)^{\lfloor{\frac{k}{2}}\rfloor +1}  \: (n+1-2k)^{\delta(k)} \:
				\frac {\prod\limits_{\lambda = 1}^{\lfloor{\frac{k}{2}}\rfloor }(n+1)^2 - (4 \lambda -2)^2}	{ k!}  \\
					\end{equation*}
			
	\underline{$n \equiv 6 \pmod{8}$} 	
				\begin{equation*} 	
				 \phi_k(n) =
				\begin{cases}
					0  & \mbox{for $k$ even } \\   
					2 \: (-1)^{\lfloor{\frac{k}{2}}\rfloor +1}
					\:  n \: \frac {\prod\limits_{\lambda = 1}^{\lfloor{\frac{k}{2}}\rfloor } n^2 - (4 \lambda -2)^2}	{ k!} & \mbox{for $k$ odd } \end{cases}   
					\end{equation*}
				
	\underline{$n \equiv 7 \pmod{8}$} 	
				\begin{equation*}
		   \phi_k(n) = (-1)^{\lfloor{\frac{k}{2}}\rfloor + \delta(k)} (n+1) (n+1-2k)^{\delta(k-1)}  \:
				\frac {\prod\limits_{\lambda = 1}^{\lfloor{\frac{k}{2}}\rfloor - \delta(k-1)} (n+1)^2 - (4 \lambda)^2}	{ k!}  
		\end{equation*}
	
	\end{lemma}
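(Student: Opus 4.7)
The plan is to deduce this lemma directly from the Eight Levels Theorem (Theorem~\ref{Theorem of the 4 levels}) by invoking the definition $\phi_k(n) := 4^k\,\Psi_k(n)$ and clearing the factor $4^k$ that appears in every denominator of the explicit $\Psi_k(n)$ formulas. So the proof is essentially a routine case-by-case substitution across the eight residue classes $n \equiv 0,1,2,3,4,5,6,7 \pmod{8}$.

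Concretely, I would proceed as follows. First, for each of the four even residue classes $n \equiv 0,2,4,6 \pmod{8}$, read off the formula for $\Psi_k(n)$ from Theorem~\ref{Theorem of the 4 levels}, multiply both sides by $4^k$, and observe that the factor $4^k$ in the numerator cancels with the $4^k$ in the denominator, producing exactly the $k!$-denominator formula in the lemma; in each of these classes one of the two parities of $k$ gives $\Psi_k(n)=0$, hence $\phi_k(n)=0$, which matches the piecewise statement. Second, do the same for the four odd classes $n \equiv 1,3,5,7 \pmod{8}$, where every $k$ contributes a nonzero formula.

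The one mildly non-cosmetic step will be reconciling the upper limit $\lfloor\frac{k-1}{2}\rfloor$ that appears in Theorem~\ref{Theorem of the 4 levels} for $n \equiv 3,7 \pmod 8$ with the upper limit $\lfloor\frac{k}{2}\rfloor - \delta(k-1)$ stated in the lemma for those same classes. I would dispose of this by the elementary identity
\[
\left\lfloor\frac{k-1}{2}\right\rfloor \;=\; \left\lfloor\frac{k}{2}\right\rfloor - \delta(k-1),
\]
verified separately for $k$ even (both sides equal $\frac{k}{2}-1$) and $k$ odd (both sides equal $\frac{k-1}{2}$). Once this identity is in hand, the expressions on each side of each case agree term for term.

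The main (and only) obstacle is bookkeeping: eight residue classes, two parities of $k$ in some of them, and the sign, the center factor $(n+1-2k)^{\delta(\cdot)}$, the leading $(n+1)$ or $n$ factor, and the product range must be transcribed without slip. Since no genuinely new algebraic identity is required beyond the index identity above, I expect this lemma to follow with essentially zero additional content once the Eight Levels Theorem is accepted, so the presentation can be kept brief: state that $\phi_k(n)=4^k\Psi_k(n)$, substitute, cancel $4^k$, and invoke $\lfloor\frac{k-1}{2}\rfloor = \lfloor\frac{k}{2}\rfloor - \delta(k-1)$ where needed.
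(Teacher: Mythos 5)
Your proposal is correct and matches the paper exactly: the paper gives no explicit proof, simply asserting that the lemma follows from Theorem \eqref{Theorem of the 4 levels} via the definition $\phi_k(n)=4^k\Psi_k(n)$, which is precisely your substitute-and-cancel argument. Your verification of the index identity $\lfloor\frac{k-1}{2}\rfloor=\lfloor\frac{k}{2}\rfloor-\delta(k-1)$ for the $n\equiv 3,7\pmod 8$ cases is a correct and worthwhile detail that the paper leaves implicit.
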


	\subsection{Nonlinear recurrence relation  to generate $\phi-$sequence}
	
	To study the arithmetic properties of $\phi-$Sequence, we need to generate $\phi_k(n)$ from the previous one, $\phi_{k-2}(n)$, or generate $\phi_{k}(n)$ from the next one, $\phi_{k+2}(n)$. As 
	\[     \frac{\Psi_{k}(n)}{\: \: \Psi_{k-2}(n)}   = \frac{\: \phi_{k}(n)}{ \: 4^2  \: \phi_{k-2}(n)},                          \]
	from Theorem \eqref{Generating psi}, we get the following desirable theorem.
	\begin{theorem}{(Generating The $\phi-$integers From The Previous Term)}
		\label{Generating phi}  \\
		If $ \phi_{k}(n)$ not identically zero, $n$ even, then
		\[   \frac{\phi_{k}(n)}{\phi_{k-2}(n)} = - \: \frac{\: \:	n^2  \: - \: (2k-4)^2 \:}{\: k \: (k-1)},
		\]
		and depending on the parity of $k$ we can choose either of the following initial values to generate $\phi_k(n)$ from the starting terms $\phi_0(n)$ or $\phi_1(n)$:
		\begin{equation}
			\label{initial values - C} 
			\begin{aligned}	
				\phi_0(n) = 		
				\begin{cases}
					+2  &  n \equiv 0  \pmod{8}  \\   
					-2  &  n \equiv  4  \pmod{8}  \\   
				\end{cases} 
				\qquad , \qquad 
				\phi_1(n) = 		
				\begin{cases}
					+ 2 \: n &  n \equiv 2  \pmod{8}  \\     
					-2 \:  n  &  n \equiv 6  \pmod{8}  \\   
				\end{cases}, 
			\end{aligned} \\
		\end{equation}
	\end{theorem}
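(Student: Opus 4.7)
The plan is to deduce Theorem~\ref{Generating phi} as a direct corollary of Theorem~\ref{Generating psi}, using nothing beyond the definition $\phi_k(n) := 4^k\,\Psi_k(n)$ and the initial values already recorded in Lemma~\ref{2B}. The main observation is that the defining relation gives at once
\[
\frac{\phi_{k}(n)}{\phi_{k-2}(n)} \;=\; \frac{4^{k}\,\Psi_{k}(n)}{4^{k-2}\,\Psi_{k-2}(n)} \;=\; 16\cdot\frac{\Psi_{k}(n)}{\Psi_{k-2}(n)},
\]
so the nonlinear recurrence for $\phi$ should follow by multiplying the recurrence of Theorem~\ref{Generating psi} by $16$, with the factor of $16$ in the denominator canceling cleanly.

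Specifically, I would specialize Theorem~\ref{Generating psi} to the case $n$ even. Since $\delta(n)=0$ and $\delta(n-1)=1$ when $n$ is even, the ``correction'' terms $(-1)^{\lfloor n/2\rfloor+k}\delta(n)$ and $2\delta(n-1)$ collapse to $0$ and $2$ respectively. Hence
\[
\bigl[n+(-1)^{\lfloor n/2\rfloor+k}\delta(n)\bigr]^{2} - \bigl[2k-2-2\delta(n-1)\bigr]^{2} \;=\; n^{2}-(2k-4)^{2},
\]
and the numerator of $\Psi_{k}(n)/\Psi_{k-2}(n)$ reduces exactly to $-\bigl(n^{2}-(2k-4)^{2}\bigr)/\bigl(16\,k(k-1)\bigr)$. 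Multiplying through by $16$ yields the asserted formula for $\phi_{k}(n)/\phi_{k-2}(n)$.

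For the initial values, I would invoke Lemma~\ref{lemma two} and the explicit formulas in Lemma~\ref{values of phi} to record that for $n$ even the sequence $\phi_{k}(n)$ is supported on a single parity class of~$k$: even $k$ when $n\equiv 0,4\pmod{8}$ and odd $k$ when $n\equiv 2,6\pmod{8}$. Thus the recurrence needs a starting value on the correct parity, and from Lemma~\ref{2B} one reads off $\phi_{0}(n)=\pm2$ for $n\equiv 0,4\pmod{8}$ and $\phi_{1}(n)=\pm 2n$ for $n\equiv 2,6\pmod{8}$, matching \eqref{initial values - C}.

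There is no real obstacle; the only point requiring care is the bookkeeping of the parities of $n$ and $k$, in particular verifying that the closed-form ratio in Theorem~\ref{Generating psi} was stated symmetrically enough to cover all four residue classes $n\equiv 0,2,4,6\pmod{8}$ simultaneously. As a sanity check, I would compute $\phi_{2}(n)/\phi_{0}(n)$ directly from the explicit formulas of Lemma~\ref{values of phi} for $n\equiv 0\pmod 8$ and $\phi_{3}(n)/\phi_{1}(n)$ for $n\equiv 2\pmod 8$, and confirm that both equal $-(n^{2}-0)/2$ and $-(n^{2}-4)/6$ respectively, as dictated by the claimed recurrence.
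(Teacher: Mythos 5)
Your proposal is correct and follows essentially the same route as the paper: the paper likewise observes $\Psi_{k}(n)/\Psi_{k-2}(n)=\phi_{k}(n)/\bigl(4^{2}\,\phi_{k-2}(n)\bigr)$ and then invokes Theorem~\ref{Generating psi}, with the initial values read off from Lemma~\ref{2B}. Your extra bookkeeping of $\delta(n)=0$, $\delta(n-1)=1$ for $n$ even, and the numerical sanity checks, only make explicit what the paper leaves implicit.
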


	\section{An overview of applications towards Mersenne primes}

	From Lemmas \eqref{lemma 1B}, \eqref{2B},\eqref{values of phi}, we ready to prove the following theorem.
	
	\subsection{Primality tests for Mersenne numbers}
	
	\begin{theorem}{(Lucas-Lehmer-Moustafa)}{(Version 1)}
		\label{Theorem of result-33}\\
		Given prime $p \geq 5$. $2^p-1$ is prime  \textbf{ if and only if}	
		\begin{equation}
			\label{Equation of result-33} 
			\Large	2^p-1 \quad  \vert \quad \sum_{\substack{k=0,\\ k \:even}}^{\lfloor{\frac{n}{2}}\rfloor}  \phi_k (n)
		\end{equation}
		where $n:=2^{p-1}$,  $\phi_k (n)$ are defined by the double index recurrence relation \[\phi_k (m)= 4 \: \phi_{k-1}(m-2) - \phi_k (m-4) \]
		and the initial boundary values satisfy 
		\begin{equation}
			\label{initial values - 1BB} 
			\begin{aligned}	
				\phi_0(m) = 		
				\begin{cases}
					+2  &  m \equiv 0  \pmod{8}  \\   
					\: \: 0  &  m \equiv \pm 2  \pmod{8}  \\  
					-2  &  m \equiv 4  \pmod{8}  \\   
				\end{cases} 
				\qquad , \qquad 
				\phi_1(m) = 		
				\begin{cases}
					+ 2 \: m &  m \equiv 2  \pmod{8}  \\   
					\quad 0 &  m \equiv 0, \:4  \pmod{8}  \\   
					-2 \:  m  &  m \equiv 6  \pmod{8}  \\    
				\end{cases}, 
			\end{aligned} \\
		\end{equation}
		
	\end{theorem}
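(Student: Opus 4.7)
The plan is to derive the criterion from the classical Lucas--Lehmer Theorem \eqref{E0} by specializing the Eight Levels expansion of Theorem \eqref{Theorem of the 4 levels} at the algebraic integers $(x,y)=(1+\sqrt{3},\,1-\sqrt{3})$. With this choice one has $xy=-2$, $x^{2}+y^{2}=8$, and $x+y=2$, and since $n=2^{p-1}$ is even the even-$n$ form of the expansion applies. Substituting gives
\[
(1+\sqrt{3})^{n}+(1-\sqrt{3})^{n}=\sum_{k=0}^{n/2}\Psi_{k}(n)(-2)^{n/2-k}8^{k}=2^{n/2}\sum_{k=0}^{n/2}(-1)^{n/2-k}\,4^{k}\Psi_{k}(n),
\]
which, after absorbing $4^{k}$ into the sequence via $\phi_{k}(n):=4^{k}\Psi_{k}(n)$, becomes
$\,2^{n/2}\sum_{k=0}^{n/2}(-1)^{n/2-k}\phi_{k}(n)$.

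Next I would use the hypothesis $p\geq 5$, which forces $n=2^{p-1}\equiv 0\pmod 8$ and $n/2=2^{p-2}$ even. The Eight Levels Theorem (case $n\equiv 0\pmod 8$), equivalently Lemma \eqref{lemma two}, then kills every odd-$k$ term: $\phi_{k}(n)=0$ for $k$ odd. On the surviving even-$k$ indices the sign $(-1)^{n/2-k}$ equals $+1$, so the identity collapses cleanly to
\[
(1+\sqrt{3})^{n}+(1-\sqrt{3})^{n}\;=\;2^{n/2}\sum_{\substack{k=0\\ k\,\text{even}}}^{\lfloor n/2\rfloor}\phi_{k}(n).
\]
Because $2^{p}-1$ is odd, $\gcd(2^{p}-1,\,2^{n/2})=1$, so the Lucas--Lehmer divisibility $2n-1\mid(1+\sqrt{3})^{n}+(1-\sqrt{3})^{n}$ is equivalent to $2^{p}-1\mid \sum_{k\,\text{even}}\phi_{k}(n)$, which is exactly \eqref{Equation of result-33}.

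It remains to justify that the sequence $\phi_{k}(m)$ described by the recurrence and boundary data in the theorem statement coincides with $4^{k}\Psi_{k}(m)$. The double-index recurrence $\phi_{k}(m)=4\phi_{k-1}(m-2)-\phi_{k}(m-4)$ is obtained by multiplying the identity $\Psi_{k}(m)=\Psi_{k-1}(m-2)-\Psi_{k}(m-4)$ of Lemma \eqref{lemma one} by $4^{k}$; this is recorded as Lemma \eqref{lemma 1B}. The boundary values for $\phi_{0}(m)$ and $\phi_{1}(m)$ on the residues $m\equiv 0,\pm 2,4\pmod 8$ are read directly from Lemma \eqref{2B}, and these are the only residues the recurrence ever visits when descending from $(n,k)$ with $n\equiv 0\pmod 8$, since each step decreases $m$ by $2$ or $4$ while keeping $m$ even. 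Hence the restricted initial data given in the theorem uniquely pins down the same integers appearing in the divisibility condition.

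The only genuine obstacle is the bookkeeping in the first step: one must verify that the parity of $n/2$ and the vanishing of odd-$k$ terms conspire to eliminate the alternating sign $(-1)^{n/2-k}$ and leave a plain sum. Once $p\geq 5$ is used to guarantee $n\equiv 0\pmod 8$ and $n/2$ even, everything else is a direct specialization of previously established results, so no deeper arithmetic input (beyond Lucas--Lehmer itself) is required.
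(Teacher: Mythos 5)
Your proposal is correct and matches the paper's own argument essentially step for step: both start from the classical Lucas--Lehmer criterion, specialize the Eight Levels expansion at $(x,y)=(1+\sqrt{3},1-\sqrt{3})$ so that $xy=-2$ and $x^{2}+y^{2}=8$, use $n\equiv 0\pmod 8$ to kill the odd-$k$ terms and extract the unit factor $2^{\lfloor n/2\rfloor}$ coprime to $2^{p}-1$, and then identify $\phi_{k}=4^{k}\Psi_{k}$ via Lemmas \eqref{lemma 1B} and \eqref{2B}. Your extra remarks on the sign $(-1)^{n/2-k}$ and on why the restricted boundary data suffice (the recurrence only visits even residues of $m$) are sound and only make explicit what the paper leaves implicit.
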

	
	\begin{proof}
		Given prime $p \geq 5$, let $n:=2^{p-1}$. From Lucas-Lehmer-Test, see \cite{Jean} and \cite{Elina}, we have \\
		\begin{equation*}
			\begin{aligned}
				2^p -1 \quad \text{is prime} \quad  \iff 2^p -1  \quad | \quad (1+\sqrt{3})^n + (1-\sqrt{3})^n.	
			\end{aligned}
		\end{equation*}
		Hence, as $n \equiv 0 \pmod{8}$, replace $x= 1+\sqrt{3}, \quad  y= 1-\sqrt{3}$ in Theorem \eqref{Theorem of the 4 levels}, we get the following equivalent statements:
		\begin{equation*}
			\begin{aligned}
				2^p -1 \quad \text{is prime} \quad  &\iff 2^p -1  \quad | \quad \sum_{\substack{k=0,\\ k \:even}}^{\lfloor{\frac{n}{2}}\rfloor}  \Psi_k(n) \:
				(-2)^{\lfloor{\frac{n}{2}}\rfloor -k} (8)^{k} \\ 
				&\iff 2^p -1  \quad | \quad 2^{\lfloor{\frac{n}{2}}\rfloor} \sum_{\substack{k=0,\\ k \:even}}^{\lfloor{\frac{n}{2}}\rfloor}  \Psi_k(n) \quad 4^k\\ 	
				&\iff 2^p -1  \quad | \quad  \sum_{\substack{k=0,\\ k \:even}}^{\lfloor{\frac{n}{2}}\rfloor}  \phi_k(n). \\ 
			\end{aligned}
		\end{equation*}
		
		Lemmas \eqref{lemma 1B}, \eqref{2B} already proved the rest of Theorem \eqref{Theorem of result-33}. 
	\end{proof}
	
	From Lemmas \eqref{lemma 1B}, \eqref{2B}, we should observe that the recurrence relation of	$\phi_k(n)$ is always even integer for  $n \equiv 0 \pmod{8}$. Hence from Lemma \eqref{values of phi}, we get the following theorem.

	\begin{theorem}{(Lucas-Lehmer-Moustafa)}{(Version 2) }
		\label{Theorem of result-333}\\
		Given prime $p \geq 5$,  $n:=2^{p-1}$. The number $2^p-1$ is prime  \textbf{ if and only if} 	
		\begin{equation}
			\label{Equation of result-333} 
			2\:n -1  \quad  \vert \quad \sum_{\substack{k=0,\\ k \:even}}^{\lfloor{\frac{n}{2}}\rfloor}   \quad  (-1)^{\lfloor{\frac{k}{2}}\rfloor}	\: \frac {\prod\limits_{\lambda = 0}^{\lfloor{\frac{k}{2}}\rfloor -1} n^2 - (4 \lambda)^2}	{ k!}. 
		\end{equation}
	\end{theorem}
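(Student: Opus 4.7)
The plan is to deduce Version 2 directly from Theorem \ref{Theorem of result-33} (Version 1), which has just been established, by substituting the closed-form expression for $\phi_k(n)$ supplied by Lemma \ref{values of phi} in the regime $n \equiv 0 \pmod{8}$, and then clearing a harmless common factor of $2$. The key numerical observation is that for $p \geq 5$ the integer $n = 2^{p-1}$ is divisible by $16$, so in particular $n \equiv 0 \pmod{8}$, which is exactly the case where the first branch of Lemma \ref{values of phi} applies.

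First I would invoke Version 1 to rewrite the primality condition as the divisibility
\[
2n - 1 \;\Big|\; \sum_{\substack{k=0 \\ k \text{ even}}}^{\lfloor n/2 \rfloor} \phi_k(n).
\]
Then I would substitute the explicit formula
\[
\phi_k(n) \;=\; 2\,(-1)^{\lfloor k/2 \rfloor}\,\frac{\prod_{\lambda = 0}^{\lfloor k/2 \rfloor - 1} n^2 - (4\lambda)^2}{k!},
\]
valid for $n \equiv 0 \pmod{8}$ and $k$ even, with the Section~1 convention that the empty product at $k=0$ equals $1$. This rewrites the sum as exactly twice the expression appearing on the right-hand side of equation \eqref{Equation of result-333}.

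Finally, since $2n - 1 = 2^p - 1$ is odd we have $\gcd(2n - 1,\, 2) = 1$, so the common factor of $2$ can be cleared from the divisibility, producing the statement of Version 2. Conversely, multiplying back by $2$ recovers the Version 1 condition, so the two divisibilities are equivalent.

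The argument is essentially a direct substitution, and I do not foresee any serious obstacle: all heavy lifting is already contained in the Eight Levels Theorem and its corollary Lemma \ref{values of phi}. The only bookkeeping points worth checking are the convention on the empty product for $k=0$ and the consistency with Lemma \ref{lemma two}, which guarantees that the odd-$k$ contributions to the sum vanish, so restricting the summation index to even $k$ is compatible between Version 1 and Version 2.
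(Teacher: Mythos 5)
Your proposal is correct and follows essentially the same route as the paper: the paper likewise derives Version 2 from Version 1 by substituting the explicit formula for $\phi_k(n)$ in the case $n \equiv 0 \pmod{8}$ from Lemma \ref{values of phi} and cancelling the factor of $2$ against the odd modulus $2n-1$. The only detail worth making explicit (which the paper notes via the recurrence and initial values) is that each $\phi_k(n)$ is an \emph{even} integer for $n \equiv 0 \pmod{8}$, so the halved terms $\prod_{\lambda=0}^{\lfloor k/2\rfloor-1}\bigl(n^2-(4\lambda)^2\bigr)/k!$ are genuinely integers and the divisibility in \eqref{Equation of result-333} is well posed.
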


	\subsection{Generating $\phi-$integers}
	When we generate the $\phi-$integers needed for Mersenne numbers, we should notice that for $n:=2^p, p\geq 5,$ we have  \[n \equiv 0 \pmod{8}.\]
	Hence we get the following theorem. 
	
	\begin{theorem}{(Generating $\phi-$integers without changing $n$)}
		\label{Generating phi for Mersenne}  \\
		For $n:=2^{p-1}, p\geq 5,$ then

		\begin{equation}
			\label{rec phi}	
			\frac{\phi_{k}(n)}{\phi_{k-2}(n)} = - \: \frac{\: \:	n^2  \: - \: (2k-4)^2 \:}{\: k \: (k-1)},
		\end{equation}
				and we can choose either of the following initial values to generate $\phi_k(n)$ from the starting term $\phi_0(n)$ or the last term $\phi_{\lfloor{\frac{n}{2}}\rfloor }(n)$ :
		\begin{equation*}
			\label{special starting values of phi} 
			\begin{aligned}	
				\phi_0(n) = +2 		
				\qquad , \qquad 
				\phi_{\lfloor{\frac{n}{2}}\rfloor }(n) = 4^{\lfloor{\frac{n}{2}}\rfloor}.
			\end{aligned} \\
		\end{equation*}
		
	\end{theorem}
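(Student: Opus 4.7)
The plan is to derive Theorem \ref{Generating phi for Mersenne} as a direct specialization of the already-proved Theorem \ref{Generating phi}, together with a short separate argument identifying the terminal value. First I would record the structural observation that makes the specialization clean: since $p \geq 5$, the integer $n = 2^{p-1}$ is divisible by $16$, so $n \equiv 0 \pmod{8}$ and in particular $\lfloor n/2 \rfloor = n/2 = 2^{p-2}$, which is itself even. Thus every case split in Lemma \ref{values of phi} collapses to the single row $n \equiv 0 \pmod{8}$, and the only relevant part of the initial data from Lemma \ref{2B} is $\phi_0(n) = +2$.

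With this observation in hand, the ratio identity
\[
\frac{\phi_k(n)}{\phi_{k-2}(n)} \;=\; -\,\frac{n^2 - (2k-4)^2}{k(k-1)}
\]
is immediate: it is exactly the statement of Theorem \ref{Generating phi} for any even $n$, applied here to $n \equiv 0 \pmod{8}$. In particular no new computation is needed, and one inherits the fact that the recurrence propagates unambiguously from any nonzero seed. The starting value $\phi_0(n) = +2$ is then simply the first case of Lemma \ref{2B}.

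For the terminal value $\phi_{n/2}(n) = 4^{n/2}$, the quickest route is to invoke the definition $\phi_k(n) = 4^k \Psi_k(n)$ together with the fact $\Psi_{\lfloor n/2 \rfloor}(n) = 1$ recorded in Theorem \ref{Generating psi}. I would give this fact a one-line justification for completeness: specialize the expansion of Theorem \ref{Theorem of the 4 levels} at $y = 0$ (permissible by the algebraic independence of $xy$ and $x^2 + y^2$ over $\mathbb{C}(x,y)$ up to the relevant substitution), so that only the $k = n/2$ summand survives and contributes $\Psi_{n/2}(n)\, x^n = x^n$, forcing $\Psi_{n/2}(n) = 1$. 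Multiplying by $4^{n/2}$ then gives $\phi_{n/2}(n) = 4^{n/2}$.

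The only potential obstacle is consistency: one must check that running the ratio recurrence forward from $\phi_0(n) = 2$ reaches the same value at index $n/2$ as running it backward from $\phi_{n/2}(n) = 4^{n/2}$. This reduces to verifying the telescoping identity
\[
4^{n/2} \;=\; 2 \cdot \prod_{j=1}^{n/4} \left(-\,\frac{n^2 - (4j-4)^2}{(2j)(2j-1)}\right),
\]
which, after collecting signs and factoring the numerator as $(n-4(j-1))(n+4(j-1))$, is precisely the $n \equiv 0 \pmod{4}$ identity of Theorem \ref{E4}. Thus the consistency check is not logically circular here so long as Theorem \ref{E4} is proved independently (e.g.\ directly from Lemma \ref{values of phi} by setting $k = n/2$), and I would present it as a brief sanity computation rather than the main content of the proof.
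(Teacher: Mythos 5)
Your proposal is correct and follows essentially the same route as the paper, which obtains this theorem simply by specializing Theorem \ref{Generating phi} to $n=2^{p-1}\equiv 0\pmod 8$ and reading off the boundary values from $\phi_k(n)=4^k\Psi_k(n)$ together with $\phi_0(n)=+2$ and $\Psi_{\lfloor n/2\rfloor}(n)=1$. Your added justification that $\Psi_{\lfloor n/2\rfloor}(n)=1$ (via $y=0$) and the forward/backward consistency check through the $n\equiv 0\pmod 4$ identity of Theorem \ref{New Combinatorial Identities} are correct supplements that the paper leaves implicit.
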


	\begin{theorem}{(Lucas-Lehmer-Moustafa)}{(Version 3)}
		\label{Theorem of result-4}\\
		Given prime $p \geq 5, \: n:=2^{p-1}$. The number $2^p-1$ is prime  \textbf{ if and only if} 	
		\begin{equation}
			\label{Equation of result-44} 
				2\: n -1 \quad  \vert \quad \sum_{\substack{k=0,\\ k \:even}}^{\lfloor{\frac{n}{2}}\rfloor}  \phi_k (n)
		\end{equation}
		where $\phi_k (n)$ are defined and generated by the double index recurrence relation
		\[   \frac{\phi_{k}(n)}{\phi_{k-2}(n)} = - \: \frac{\: \:	n^2  \: - \: (2k-4)^2 \:}{\: k \: (k-1)},
		\]
		and we can choose either of the following initial values to generate $\phi_k(n)$ from the starting term $\phi_0(n)$ or the last term $\phi_{\lfloor{\frac{n}{2}}\rfloor }(n)$ :
		\begin{equation*}
			\label{starting values of phi for mersenne} 
			\begin{aligned}	
				\phi_0(n) = +2 		
				\qquad , \qquad 
				\phi_{\lfloor{\frac{n}{2}}\rfloor }(n) = 4^{\lfloor{\frac{n}{2}}\rfloor}.
			\end{aligned} \\
		\end{equation*}
		
	\end{theorem}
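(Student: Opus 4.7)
The plan is to read this theorem as a direct consequence of two already-established ingredients: the primality criterion in Version 1 (Theorem \ref{Theorem of result-33}) and the nonlinear recurrence for the $\phi$-sequence recorded in Theorem \ref{Generating phi for Mersenne}. Substantively no new analytic or number-theoretic work is needed; the task is to check that the two descriptions of $\phi_k(n)$ match, and then to quote Version 1.

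First I would invoke Version 1, which already gives
\[
2^p-1 \text{ is prime} \iff (2n-1)\;\Big|\;\sum_{\substack{k=0\\ k\text{ even}}}^{\lfloor n/2\rfloor} \phi_k(n),
\]
where $\phi_k(n)$ is generated by the linear double-index recurrence $\phi_k(m)=4\phi_{k-1}(m-2)-\phi_k(m-4)$ with the boundary data of Lemma \ref{2B}. All that remains is to verify that the nonlinear recurrence stated in the present theorem, under either initial datum listed, reproduces the same even-indexed subsequence $\phi_0(n), \phi_2(n), \phi_4(n), \ldots$. Since $p\geq 5$, we have $n=2^{p-1}\equiv 0\pmod 8$, so Theorem \ref{Generating phi for Mersenne} applies directly and tells me that the $\phi_k(n)$ coming from Version 1 already satisfy
\[
\frac{\phi_k(n)}{\phi_{k-2}(n)} \;=\; -\,\frac{n^2-(2k-4)^2}{k(k-1)}
\]
for every even $k$ with $2\leq k\leq \lfloor n/2\rfloor$, together with $\phi_0(n)=2$ from Lemma \ref{2B}. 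The factor $n^2-(2k-4)^2=(n-2k+4)(n+2k-4)$ is strictly positive throughout this range, so no division by zero arises and the forward iteration from $\phi_0(n)=2$ unambiguously reproduces the required subsequence.

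For the second initial choice I would verify $\phi_{\lfloor n/2\rfloor}(n)=4^{\lfloor n/2\rfloor}$. Because $\phi_k(n)=4^k\Psi_k(n)$, it suffices to show $\Psi_{\lfloor n/2\rfloor}(n)=1$, and this falls out of the Eight Levels expansion \eqref{Equation of result-4} by matching the coefficient of the top-degree monomial $x^n$ on both sides: for even $n$, only the $k=n/2$ summand contributes to the $x^n$ term, with coefficient $\Psi_{n/2}(n)\cdot 1$. Backward iteration from this terminal datum then produces the same sequence as forward iteration, and the consistency of the two choices is really just the $n\equiv 0\pmod 4$ case of Theorem \ref{E4}.

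The main obstacle is almost nonexistent: everything is bookkeeping that follows from results already proved. The only places where care is genuinely required are (i) the non-vanishing check on $n^2-(2k-4)^2$ for even $k\leq n/2$, which legitimizes the division in the nonlinear recurrence; and (ii) the leading-coefficient identity $\Psi_{\lfloor n/2\rfloor}(n)=1$, which is a one-line coefficient comparison in \eqref{Equation of result-4}. With these in hand, combining Version 1 with Theorem \ref{Generating phi for Mersenne} closes the argument.
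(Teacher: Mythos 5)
Your proposal is correct and matches the paper's own (implicit) treatment: the paper also presents Version 3 as an immediate combination of Version 1 with Theorem \ref{Generating phi for Mersenne}, which supplies the nonlinear recurrence and both initial values for $n=2^{p-1}\equiv 0\pmod 8$. Your two added checks (non-vanishing of $n^2-(2k-4)^2$ and $\Psi_{\lfloor n/2\rfloor}(n)=1$) are sound and only make explicit what the paper leaves tacit.
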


	\subsection{Criteria for compositeness of Mersenne numbers }
	
	The following theorem is an immediate consequence of Theorem \eqref{Theorem of result-333}
	
	\begin{theorem}{(Criteria for compositeness of Mersenne numbers ) }

		\label{Criteria for Compositeness of Mersenne Numbers}
		Given prime $p \geq 5$. The number  $2n-1=2^p -1$ is Mersenne composite number if
		\begin{equation}
			\label{Equation of result-3333} 
			\large	2 \: n \: - \: 1 \quad  \nmid \quad \sum_{\substack{k=0,\\ k \:even}}^{\lfloor{\frac{n}{2}}\rfloor}   \quad  	\: \frac {\prod\limits_{\lambda = 0}^{\lfloor{\frac{k}{2}}\rfloor -1}  [(4 \lambda)^2 \: - \: 4^{-1}] \quad}	{ k!}. 
		\end{equation}
	\end{theorem}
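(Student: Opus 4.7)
The plan is to derive Theorem \ref{Criteria for Compositeness of Mersenne Numbers} as the contrapositive of the ``if'' direction of Theorem \ref{Theorem of result-333} (Lucas-Lehmer-Moustafa, Version 2) by a single reduction modulo $2n-1$. The pivotal arithmetic fact is that since $n=2^{p-1}$, we have $2n\equiv 1\pmod{2n-1}$; squaring gives $4n^{2}\equiv 1\pmod{2n-1}$, and because $2n-1=2^{p}-1$ is odd, $4$ is a unit in $\mathbb{Z}/(2n-1)\mathbb{Z}$. Therefore
\[ n^{2}\;\equiv\;4^{-1}\pmod{2n-1}, \]
and this substitution is precisely what converts the sum in \eqref{Equation of result-333} into the sum appearing in \eqref{Equation of result-3333}.

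Carrying out the substitution factor by factor,
\[ n^{2}-(4\lambda)^{2}\;\equiv\;4^{-1}-(4\lambda)^{2}\;=\;-\bigl((4\lambda)^{2}-4^{-1}\bigr)\pmod{2n-1}. \]
The product over $\lambda=0,1,\ldots,\lfloor k/2\rfloor-1$ consists of $\lfloor k/2\rfloor$ factors, so it contributes an overall sign $(-1)^{\lfloor k/2\rfloor}$ that cancels exactly the leading $(-1)^{\lfloor k/2\rfloor}$ present in the summand of \eqref{Equation of result-333}. Summing over even $k$ from $0$ to $\lfloor n/2\rfloor$ yields the term-by-term congruence
\[
\sum_{\substack{k=0\\ k\text{ even}}}^{\lfloor n/2\rfloor}(-1)^{\lfloor k/2\rfloor}\frac{\prod_{\lambda=0}^{\lfloor k/2\rfloor-1}\bigl(n^{2}-(4\lambda)^{2}\bigr)}{k!}
\;\equiv\;
\sum_{\substack{k=0\\ k\text{ even}}}^{\lfloor n/2\rfloor}\frac{\prod_{\lambda=0}^{\lfloor k/2\rfloor-1}\bigl((4\lambda)^{2}-4^{-1}\bigr)}{k!}
\pmod{2n-1}.
\]
Applying the contrapositive of Theorem \ref{Theorem of result-333}: if the right-hand sum is not $\equiv 0\pmod{2n-1}$ then neither is the left, so $2^{p}-1$ is not prime; since $p$ is prime this means $2^{p}-1$ is Mersenne composite, which is exactly the conclusion of \eqref{Equation of result-3333}.

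The only delicate (and in my view cosmetic) point is assigning a well-defined meaning to the denominators $k!$ and to $4^{-1}$ inside $\mathbb{Z}/(2n-1)\mathbb{Z}$. I plan to handle this by running the contrapositive in the field setting: assume $2^{p}-1$ is prime, so $\mathbb{Z}/(2n-1)\mathbb{Z}$ is a field; for every $k\le\lfloor n/2\rfloor=2^{p-2}<2^{p}-1$, each prime factor of $k!$ is smaller than $2^{p}-1$ and hence invertible, and likewise $4$ is invertible. Both sides of the displayed congruence are therefore well-defined elements of this field, and by Theorem \ref{Theorem of result-333} the left side vanishes, forcing the right side to vanish as well. (Equivalently, one can clear denominators before reducing, using $(4\lambda)^{2}-4^{-1}=4^{-1}\bigl((8\lambda)^{2}-1\bigr)$ together with the fact that both $4$ and every $k!$ with $k\le\lfloor n/2\rfloor$ are coprime to $2n-1$ under the primality hypothesis.)  This completes the contrapositive and establishes \eqref{Equation of result-3333}.
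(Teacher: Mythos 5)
Your proof is correct and follows the paper's intended route: the paper merely declares this theorem an ``immediate consequence'' of Theorem \ref{Theorem of result-333}, and the mechanism you supply --- $4n^{2}=(2n)^{2}\equiv 1$, hence $n^{2}\equiv 4^{-1}\pmod{2n-1}$, with the $(-1)^{\lfloor k/2\rfloor}$ prefactor absorbed by the sign flip in each of the $\lfloor k/2\rfloor$ factors --- is exactly the omitted reduction. Your care in interpreting $4^{-1}$ and the $k!$ denominators by running the contrapositive in the field $\mathbb{Z}/(2^{p}-1)\mathbb{Z}$ is a welcome addition that the paper does not address.
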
 	 
	
	\section{Some formulas and possible scenario}
	Now, consider $p$ prime greater than 3, and  $n:=2^{p-1}$. The previous sections give various explicit formulas and techniques to compute and generate all of the terms $\phi_k(n)$ needed for checking the primality of the Mersenne number $2^p-1$. 
	
	\subsection{Formulas}
	Now we compute the summation 
	\begin{equation}
		\label{summation of phi} 
		\Large \sum_{\substack{k=0,\\ k \:even}}^{\lfloor{\frac{n}{2}}\rfloor}  \phi_k (n)
	\end{equation}
	for the first few terms from the starting and from the end. From Theorem \eqref{Generating phi for Mersenne}, we get the following explicit terms 
	\[   \phi_0(n) = \: +\:2,\]
	Now, compute $\phi_2(n)$ as following
	\[   \phi_2(n) = - \: \frac{\: \:	n^2  \: - \: (4-4)^2 \:}{\:2 \: (2-1)} \: \phi_0(n)  =  - \: n^2.\]
	Proceeding this way, we get 
	\[ \phi_4(n) = - \: \frac{\: \:	n^2  \: - \: (8-4)^2 \:}{\:4 \: (4-1)} \: \phi_2(n)  =  + \: \frac{n^2(n^2 - 4^2)}{12}.\]
	Similarly 
	\[ \phi_6(n) =  - \: \frac{n^2(n^2 - 4^2)(n^2 - 8^2)}{360},  \]
	
	\[ \phi_8(n) = +  \: \frac{n^2(n^2 - 4^2)(n^2 - 8^2)(n^2 - 12^2)}{20160}, \quad etc               \]

	Now, compute $\phi_k(n)$ from the end, and from Theorem \eqref{Generating phi for Mersenne}, we get	   
	\[   \phi_{k-2}(n) = - \: \frac{\: k \: (k-1)}{\: \:	n^2  \: - \: (2k-4)^2 \:} \phi_{k}(n). \]
	Initially
	\[    \phi_{\lfloor{\frac{n}{2}}\rfloor }(n) = 2^n,     \]
	Hence
	\[    \phi_{\lfloor{\frac{n}{2}}\rfloor -2}(n) = - \: n \: 2^{n-5},     \]
	and 
	\[    \phi_{\lfloor{\frac{n}{2}}\rfloor -4}(n) = + \: n \: (n-6) \: 2^{n- 11}.     \]
	Similarly  
	\[    \phi_{\lfloor{\frac{n}{2}}\rfloor -6}(n) = - \: \frac{n(n-8) (n-10)}{3} \: 2^{n- 16} ,    \]
	
	\[ \phi_{\lfloor{\frac{n}{2}}\rfloor -8}(n) = + \: \frac{n(n-10) (n-12)(n-14)}{3} \: 2^{n- 23}\quad  \quad etc.  \]
	
	The author feels that we need a clever way to evaluate the sum \eqref{summation of phi}. We may like to add the terms in a way reflects some elegant arithmetic. Remember that we do not need to compute the sum \eqref{summation of phi} exactly; but we just need to find the sum modulo $2n-1$. According to the following theorem, and working modulo $2n-1$, the last term always gives the value of the first term.
	\begin{theorem}
		For $p \geq 5$ prime, and $n:=2^{p-1}$,
		\begin{equation}
			\label{last value of phi} 
			\phi_{\lfloor{\frac{n}{2}}\rfloor }(n)  \equiv 2  \pmod{2n-1}. 
		\end{equation}  
	\end{theorem}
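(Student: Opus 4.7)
The plan is to combine the closed form supplied by Theorem~\ref{Generating phi for Mersenne} with Fermat's little theorem applied to the exponent $p$. Theorem~\ref{Generating phi for Mersenne} already identifies the last term of the $\phi$-sequence as
\[ \phi_{\lfloor n/2 \rfloor}(n) = 4^{\lfloor n/2 \rfloor}. \]
Since $n = 2^{p-1}$ is even for $p \geq 5$, we have $\lfloor n/2 \rfloor = n/2$, and therefore $\phi_{n/2}(n) = 4^{n/2} = 2^{n}$. This reduces the statement of the theorem to the single elementary congruence
\[ 2^{n} \equiv 2 \pmod{2n-1}. \]

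To establish this reduced congruence, first I would observe that by definition $2n-1 = 2^{p}-1$, so that
\[ 2^{p} \equiv 1 \pmod{2n-1}. \]
Next, since $p \geq 5$ is an odd prime, Fermat's little theorem applied to the base $2$ gives $2^{p-1} \equiv 1 \pmod{p}$, so there is a positive integer $m$ with $n = 2^{p-1} = 1 + p m$. Substituting this expression for $n$ into $2^{n}$ yields
\[ 2^{n} = 2^{1 + p m} = 2 \cdot (2^{p})^{m} \equiv 2 \cdot 1^{m} = 2 \pmod{2n-1}, \]
which is exactly the claim.

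There is no real obstacle in this proof: the heavy lifting has already been done in Theorem~\ref{Generating phi for Mersenne}, whose verification rests in turn on the Eight Levels Theorem and the recurrence governing the $\phi$-sequence. The remaining argument is a two-line application of Fermat's little theorem, equivalent to noting that $\operatorname{ord}_{\,2^{p}-1}(2) = p$ and that $p \mid n-1$. The only points worth flagging are that $p$ must be odd in order for Fermat's theorem with base $2$ modulo $p$ to be applicable (which is guaranteed by $p \geq 5$), and that the integer $m$ in $n-1 = pm$ is non-negative, so that the step $(2^{p})^{m} \equiv 1^{m}$ is legitimate.
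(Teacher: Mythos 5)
Your proposal is correct and follows essentially the same route as the paper: both reduce the claim to $2^{n}\equiv 2\pmod{2^{p}-1}$ via $\phi_{\lfloor n/2\rfloor}(n)=4^{\lfloor n/2\rfloor}=2^{n}$, then write $n=2^{p-1}=1+pm$ (the paper simply asserts this; you correctly identify it as Fermat's little theorem) and conclude $2^{n}=2\cdot(2^{p})^{m}\equiv 2$. The only difference is that you make the appeal to Fermat's little theorem explicit, which the paper leaves implicit.
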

	\begin{proof}
		We should observe that if $p \geq 5$ prime, then $n=2^{p-1} = 1 + \zeta \: p$, for some positive integer $\zeta.$ Then
		\[\phi_{\lfloor{\frac{n}{2}}\rfloor }(n) =  2^n \: = \: 2^{1 + \zeta \: p} = 2^1 \: (2^p)^\zeta  \equiv 2  \pmod{2n-1}.  \] 
	\end{proof}
	Hence this encourages one to compute the partial sums of
	\begin{equation}
		\label{summation of phi mod} 
		\sum_{\substack{k=0,\\ k \:even}}^{\lfloor{\frac{n}{2}}\rfloor}  \phi_k (n) \pmod {2n-1},
	\end{equation}
	in the following order. 
	\subsection{The $5$ scenario}
	For example, take $p=5$, then $n=2^4$. Hence
	\begin{equation}
		\label{example for sum of phi} 
		\begin{aligned}	
			\phi_0(31)&\equiv\: +2 \pmod {31}, &  \phi_2(31)&\equiv  \; - \; 2^3  \pmod {31}, \\  \phi_4(31)&\equiv+2^2 +1  \pmod {31},   &  \phi_6(31)&\equiv -1  \pmod {31},  \\
		\end{aligned} 
	\end{equation}  
	and $ \phi_8(31) \equiv \: + 2 \pmod {31}$. Hence we get the partial sums 
	\begin{equation}
		\label{scenario 1} 
		\begin{aligned}	
			\phi_{8}(31)&\equiv + 2,\\
			\phi_{8}(31) + \phi_0(31)&\equiv + 2^2, \\
			\phi_{8}(31) + \phi_0(31)  +  \phi_2(31) &\equiv - 2^2, \\
			\phi_{8}(31) + \phi_0(31)  +  \phi_2(31) + \phi_4(31)&\equiv  + 1, \\
			\phi_{8}(31) + \phi_0(31)  +  \phi_2(31) + \phi_4(31) +\phi_6(31) &\equiv  \: 0. 
		\end{aligned} 
	\end{equation}
	As we ended up with zero, this shows that $2^5 -1 = 31$ is Mersenne prime. This particular example should motivate us for more theoretical investigations for other similar scenarios for this particular pattern that may occur in general for other cases for the partial sums
	\begin{equation}
		\label{sums in order} 
		\begin{aligned}	
			&\phi_{\lfloor{\frac{n}{2}}\rfloor }(n), \\ 
			&\phi_{\lfloor{\frac{n}{2}}\rfloor }(n) + \phi_0(n), \\ 
			&\phi_{\lfloor{\frac{n}{2}}\rfloor }(n) + \phi_0(n) + \phi_2(n), \\ 
			& \phi_{\lfloor{\frac{n}{2}}\rfloor }(n) + \phi_0(n) + \phi_2(n) + \phi_4(n),\\
			&\cdots 
		\end{aligned} 
	\end{equation}

	\section{Factoring factorial in terms of difference of squares}
	
	Choosing $k=\lfloor{\frac{n}{2}}\rfloor$ in the Eight Levels Theorem \eqref{Theorem of the 4 levels}, and noting $\Psi_{\lfloor{\frac{n}{2}}\rfloor }(n) = 1$, we surprisingly get the following eight combinatorial identities which reflect some unexpected facts about the nature of numbers.  
	\begin{equation*}
		\begin{array}{l  l l}	
			\underline{n \equiv 0 \pmod{8}}  &  \quad & 4^{\lfloor{\frac{n}{2}}\rfloor} \: (\lfloor{\frac{n}{2}}\rfloor)! = 	2 \:
			\:\prod\limits_{\lambda = 0}^{\lfloor{\frac{n}{4}}\rfloor -1} n^2 \: - \: (4 \lambda)^2 		\\
			
			\underline{n \equiv 1 \pmod{8}}  &  \quad&  4^{\lfloor{\frac{n}{2}}\rfloor} \: (\lfloor{\frac{n}{2}}\rfloor)! = 	\:
			\:\prod\limits_{\lambda = 1}^{\lfloor{\frac{n-1}{4}}\rfloor } (n+1)^2 \: - \: (4 \lambda -2)^2 	  
			\\
			\underline{n \equiv 2 \pmod{8}}  &  \quad& 4^{\lfloor{\frac{n}{2}}\rfloor} \: (\lfloor{\frac{n}{2}}\rfloor)! = 2	\: \: n 
			\:\prod\limits_{\lambda = 1}^{\lfloor{\frac{n-2}{4}}\rfloor } n^2 \: - \: (4 \lambda -2)^2 	  		
			\\  
			\underline{n \equiv 3 \pmod{8}}  &  \quad& 4^{\lfloor{\frac{n}{2}}\rfloor} \: (\lfloor{\frac{n}{2}}\rfloor)! =  \: (n+1) 
			\:\prod\limits_{\lambda = 1}^{\lfloor{\frac{n-3}{4}}\rfloor } (n+1)^2 \: - \: (4 \lambda)^2 	  
			\\
			\underline{n \equiv 4 \pmod{8}}  & \quad& 4^{\lfloor{\frac{n}{2}}\rfloor} \: (\lfloor{\frac{n}{2}}\rfloor)! =  \: 2 
			\:\prod\limits_{\lambda = 0}^{\lfloor{\frac{n-4}{4}}\rfloor } n^2 \: - \: (4 \lambda)^2 	  
			\\ 		
			\underline{n \equiv 5 \pmod{8}}  & \quad& 4^{\lfloor{\frac{n}{2}}\rfloor} \: (\lfloor{\frac{n}{2}}\rfloor)! =  
			\:\prod\limits_{\lambda = 1}^{\lfloor{\frac{n-1}{4}}\rfloor } (n+1)^2 \: - \: (4 \lambda -2)^2 	  
			\\
			\underline{n \equiv 6 \pmod{8}}  &  \quad& 4^{\lfloor{\frac{n}{2}}\rfloor} \: (\lfloor{\frac{n}{2}}\rfloor)! = 2	\: \: n 
			\:\prod\limits_{\lambda = 1}^{\lfloor{\frac{n-2}{4}}\rfloor } n^2 \: - \: (4 \lambda -2)^2 	  		
			\\  
			\underline{n \equiv 7 \pmod{8}}  &   \quad& 4^{\lfloor{\frac{n}{2}}\rfloor} \: (\lfloor{\frac{n}{2}}\rfloor)! =  \: (n+1) 
			\:\prod\limits_{\lambda = 1}^{\lfloor{\frac{n-3}{4}}\rfloor } (n+1)^2 \: - \: (4 \lambda)^2 	  
			\\
		\end{array} 
	\end{equation*}
	Writing only the different identities, which are 4, we get the following theorem which gives formulas and links to factorize any factorial in terms of a product of difference of squares.

	\begin{theorem}{(New combinatorial identities)}
		\label{New Combinatorial Identities}
		For any natural number $n$, the following combinatorial identities are correct
		\begin{equation*}
			\begin{array}{l  l l}	
				\underline{n \equiv 0 \pmod{4}}  &  \quad & 4^{\lfloor{\frac{n}{2}}\rfloor} \: (\lfloor{\frac{n}{2}}\rfloor)! = 	2 \:
				\:\prod\limits_{\lambda = 0}^{\lfloor{\frac{n-4}{4}}\rfloor } n^2 \: - \: (4 \lambda)^2 	\\[1.5mm]
				
				\underline{n \equiv 1 \pmod{4}}  &  \quad&  4^{\lfloor{\frac{n}{2}}\rfloor} \: (\lfloor{\frac{n}{2}}\rfloor)! = 	\:
				\:\prod\limits_{\lambda = 1}^{\lfloor{\frac{n-1}{4}}\rfloor } (n+1)^2 \: - \: (4 \lambda -2)^2 	  
				\\[1.5mm]	
				\underline{n \equiv 2 \pmod{4}}  &  \quad& 4^{\lfloor{\frac{n}{2}}\rfloor} \: (\lfloor{\frac{n}{2}}\rfloor)! = 2	\: \: n 
				\:\prod\limits_{\lambda = 1}^{\lfloor{\frac{n-2}{4}}\rfloor } n^2 \: - \: (4 \lambda -2)^2 	  		
				\\[1.5mm]
				\underline{n \equiv 3 \pmod{4}}  &  \quad& 4^{\lfloor{\frac{n}{2}}\rfloor} \: (\lfloor{\frac{n}{2}}\rfloor)! =  \: (n+1) 
				\:\prod\limits_{\lambda = 1}^{\lfloor{\frac{n-3}{4}}\rfloor } (n+1)^2 \: - \: (4 \lambda)^2 	  
				\\
			\end{array} \\
		\end{equation*}
	\end{theorem}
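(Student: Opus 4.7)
The plan is to specialize the Eight Levels Theorem (Theorem~\ref{Theorem of the 4 levels}) to the top index $k = \lfloor n/2 \rfloor$, use the normalization $\Psi_{\lfloor n/2 \rfloor}(n) = 1$ recorded in Theorem~\ref{Generating psi}, and then observe that the resulting eight identities collapse in pairs into the four claimed identities modulo $4$. The normalization itself is immediate from comparing leading monomials in the expansion \eqref{Equation of result-4}: the highest power $(x^2+y^2)^{\lfloor n/2 \rfloor}$ appears only at $k = \lfloor n/2 \rfloor$, and its leading monomial in $x$ must match $x^n$ (respectively $x^{n-1}$, after dividing by $x+y$) coming from the left-hand side, forcing the top coefficient to equal $1$.

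First, for each of the eight residue classes $n \pmod 8$ I would substitute $k = \lfloor n/2 \rfloor$ into the corresponding explicit formula for $\Psi_k(n)$ from Theorem~\ref{Theorem of the 4 levels}, equate the result to $1$, and solve for $4^{\lfloor n/2 \rfloor}(\lfloor n/2 \rfloor)!$. This yields eight intermediate identities, each expressing the factorial as a signed product of differences of squares multiplied by a small prefactor ($2$, $2n$, or $n+1$). Second, I would verify that the identities for $n \equiv r \pmod 8$ and $n \equiv r+4 \pmod 8$ coincide for each $r \in \{0,1,2,3\}$. For example, in the $n \equiv 0 \pmod 8$ case the global sign $(-1)^{\lfloor k/2 \rfloor} = (-1)^{n/4}$ is $+1$ because $n/4$ is even, while in the $n \equiv 4 \pmod 8$ case the sign is $(-1)^{\lfloor k/2 \rfloor + 1} = (-1)^{n/4 + 1}$, which is also $+1$ since $n/4$ is odd; the upper bounds on the products both reduce to $\lfloor (n-4)/4 \rfloor$. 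Analogous parity checks handle the pairs $(1,5)$, $(2,6)$, and $(3,7)$, yielding the four identities as stated.

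The main obstacle is purely clerical: carefully tracking the endpoints $\lfloor k/2 \rfloor - 1$, $\lfloor k/2 \rfloor$, or $\lfloor (k-1)/2 \rfloor$ of the products after the substitution $k = \lfloor n/2 \rfloor$, and confirming in each mod $4$ class that these endpoints really do coincide across the two underlying mod $8$ sub-cases. The sign analysis is the other delicate part, but it reduces to the observation above that the parity of $n/4$ (for even $n$) or of $(n\pm 1)/4$ (for odd $n$) always conspires with the built-in $(-1)^{\lfloor k/2 \rfloor}$ or $(-1)^{\lfloor k/2 \rfloor + \delta(k-1)}$ in the formula to produce $+1$, which is consistent with the factorial on the left being positive. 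No new structural ingredient beyond the already-established Eight Levels Theorem is needed.
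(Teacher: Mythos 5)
Your proposal is correct and follows essentially the same route as the paper: the paper likewise sets $k=\lfloor\frac{n}{2}\rfloor$ in the Eight Levels Theorem, uses $\Psi_{\lfloor\frac{n}{2}\rfloor}(n)=1$ to obtain eight identities, and then observes that they coincide in pairs across residues $r$ and $r+4 \pmod 8$, leaving the four stated identities. Your leading-monomial justification of the normalization $\Psi_{\lfloor\frac{n}{2}\rfloor}(n)=1$ is a small welcome addition, since the paper asserts that value without an explicit argument.
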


	\subsection*{Supplementary information}
	
	Data sharing not applicable to this article as no datasets were generated or analyzed during the current study.

	\subsection*{Conflict of interest}
	The author declares that he has no conflict of interest.

	\section*{Acknowledgments}
	
	I would like to deeply thank University of Bahrain for their support.

	\medskip
\end{document}